\newtheorem{thm}{Theorem}[section]
\newtheorem{cor}[thm]{Corollary}
\theoremstyle{definition}
\newtheorem{remark}[thm]{Remark}
\newtheorem{hyp}[thm]{Hypothesis}
\numberwithin{equation}{section}
\begin{document}

\title[]{Mean Value Theorems and L'Hospital-Type Rules for Regulated Functions}%
\author{ahmed ghatasheh}

\address{Department of Mathematics, Philadelphia University, P.O.Box: 1 Amman - Jordan 19392}
\email{aghatasheh@philadelphia.edu.jo}
\email{med.ghatasheh@gmail.com}


\keywords{Regulated, Cauchy, L'Hospital, Rolle, Stolz-Cesaro, Lebesgue-Stieltjes}
\subjclass[2020]{26A24}
\begin{abstract}
We introduce a generalization of Cauchy’s mean value theorem for regulated functions. Building on this, we extend both L’Hospital’s rule and L’Hospital’s monotone rule to quotients of regulated functions. We demonstrate that our extended L’Hospital’s rule encompasses both the discrete case, known as the Stolz-Cesaro theorem, and the classical continuous case. In addition, we show that these extensions handle some problems that classical rules cannot address. Finally, we provide Lebesgue-Stieltjes versions of L’Hospital’s rule and L’Hospital’s monotone rule and compare them with our extensions.
\end{abstract}
\maketitle

\section{Introduction}

The classical L'Hospital's rule, which dates back to 1696, states that
\begin{equation}\label{LHR00001}
\lim_{x\uparrow b}\frac{f(x)}{g(x)}=\lim_{x\uparrow b}\frac{f'(x)}{g'(x)}
\end{equation}
provided that
\begin{enumerate}[leftmargin=.65cm]
\item the functions $f$ and $g$ are differentiable everywhere on an interval $(a,b)$, where $-\infty\leq a<b\leq\infty$,
\item $g'$ does not vanish anywhere in $(a,b)$,
\item $f'(x)/g'(x)$ tends to $A\in[-\infty,\infty]$ as $x\uparrow b$, and
\item either $f(x)$ and $g(x)$ tend to zero as $x\uparrow b$, $g(x)$ tends to $\infty$ as $x\uparrow b$, or $g(x)$ tends to $-\infty$ as $x\uparrow b$.
\end{enumerate}
On the other hand L'Hospital's monotone rule states that the function $f/g$ is increasing on the interval $(a,b)$ provided that the first and the second statements hold, $f'/g'$ is increasing on $(a,b)$, and either $\lim_{x\downarrow a}f(x)=\lim_{x\downarrow a}g(x)=0$ or $\lim_{x\uparrow b}f(x)=\lim_{x\uparrow b}g(x)=0$. For the proof of these rules see Rudin \cite{rudin1976principles}, Anderson et al. \cite{MR1227499}, and Pinelis \cite{MR1888920}.

One of the objectives of this article is to generalize these L'Hospital-type rules when the functions $f$ and $g$ are regulated on $(a,b)$. The function $f$ is called regulated on $(a,b)$ if for each $x\in(a,b)$ the left-hand limit $f^-(x)=\lim_{t\uparrow x}f(t)$ and the right-hand limit $f^+(x)=\lim_{t\downarrow x}f(t)$ exist.

Our generalizations  of these L'Hospital-type rules are given by Theorems \ref{LHR00017} and \ref{LHR00023}. The differentiation used in these generalizations differs slightly from the classical one; it is defined with respect to a strictly increasing function $\alpha$ on $(a,b)$ via 
\begin{equation}\label{LHR00002}
\left(D_{\alpha}f\right)(x)=\lim_{h\downarrow 0}\frac{f^-(x+h)-f^+(x-h)}{\alpha^-(x+h)-\alpha^+(x-h)}
\end{equation}
provided the limit exists. These generalizations show that the statements of these rules hold with respect to the differentiation defined by \eqref{LHR00002} under certain conditions that we impose on $f$, $g$, and $\alpha$.  

If $f'(x)$ exists for all $x\in(a,b)$, then picking $\alpha(x)=x$ gives $D_{\alpha}f=f'$. This means \eqref{LHR00002} generalizes the classical derivative. We will provide some properties of regulated functions and such differentiation in the next section.

The standard proofs of the classical L'Hospital-type rules rely on mean value theorems, in particular Cauchy's mean value theorem, see Theorem 4.9 in Rudin \cite{rudin1976principles}. Other interesting proofs of L'Hospital's rule can be found in Taylor \cite{10.2307/2307183}. In Section \ref{LHR00005}, we extend Cauchy's mean value theorem to regulated functions, with respect to the differentiation  defined by \eqref{LHR00002}. In Section \ref{LHR00016.5}, we use this extension to prove our generalized versions of these rules. There are many generalizations of mean value theorems; for interested readers we mention \cite{Matkowski+2010+765+774}, \cite{balogh2016functional}, \cite{Mateljevic2013GeneralizationsOT}, \cite{lozada2020some}, \cite{10.2307/24896380}, \cite{Witula2012MeanvalueTF}, and the references therein. In addition, the book \cite{doi:10.1142/3857} is an excellent reference for mean value theorems. Most of these generalizations assume the continuity of the underlying functions, which is not our case. 

We show that our generalizations can handle examples that cannot be addressed by some of the well-known versions of L'Hospital's rule, see Subsections \ref{LHR00025} and \ref{LHR00026}. Also, we show that the Stolz-Cesaro theorem is a special case of our generalization, see Subsection \ref{LHR00027}. Then we provide an example that involves a quotient of two functions that have discontinuities, see Subsection \ref{LHR00028}.

The second requirement in L'Hospital's rule \eqref{LHR00001} is equivalent to that either $g'$ is positive on $(a,b)$ or $g'$ is negative on $(a,b)$, because of the intermediate value property of derivatives. This requirement implies that the function $g$ is strictly monotone on $(a,b)$. Such requirement is essential, see the family of counterexamples provided by Nester and Vyborny \cite{Vyborny1989}. Another family of examples was provided by Boas \cite{doi:10.1080/00029890.1986.11971912}. However, if the function $g$ is strictly monotone and fails to satisfy the second requirement, then L'Hospital's rule may be applicable, see the example provided by Gorni \cite{10.2307/2323839}. The remaining requirements were slightly relaxed by some authors. For example, in \cite{Vyborny1989} authors relaxed the first requirement by providing a version of L'Hospital's rule for one-sided derivatives provided that the functions $f$ and $g$ are continuous. Another important generalization via essential limits is due to Lee \cite{10.2307/2040953}. Other generalizations that rely on absolute continuity of the functions $f$ and $g$ are due to Ostrowski \cite{10.2307/2318210} and Vianello \cite{10.2307/44152304}.

There are some other generalizations of L'Hospital's rule; for example a version for multivariable functions was provided by Ivele and Shilin \cite{Ivlev2014OnAG} and another one by Lawlor \cite{doi:10.1080/00029890.2020.1793635}. In addition, a version for complex-valued functions was provided by Carter \cite{10.2307/2310244}.

In Section \ref{LHR00029}, we introduce a measure-theoretic approach to L'Hospital's rule and L'Hospital's monotone rule via Lebesgue-Stieltjes integrals. Finally, in Section \ref{LHR00037.5}, we give a comparison between the earlier versions of L'Hospital-type rules, i.e., Theorems \ref{LHR00017} and \ref{LHR00023}, and the measure-theoretic approach in Section \ref{LHR00029}. These two sections are motivated by Estrada and Pavlovic \cite{Estrada2017LHpitalsMR}; who provided a comparison for theses rules in the classical case.

\section{Preliminaries}\label{qw682jhc42l,k529b}

We begin by introducing some important properties of regulated functions. Recall that a function $f:(a,b)\to\mathbb{R}$ is called regulated on $(a,b)$ if $f^-(x)=\lim_{t\uparrow x}f(t)$ and $f^+(x)=\lim_{t\downarrow x}f(t)$ exist for all $x\in(a,b)$. Sums and products of regulated functions are again regulated functions. Additionally, every regulated function has a countable number of discontinuities, see Lemma 1.4.1 in Lakshmikantham \cite{lakshmikantham2017monotone}. Also, functions of locally bounded variation on $(a,b)$ are regulated on $(a,b)$.

We will use the following result frequently in this paper. We state it without proof as the reasoning is straightforward.

\begin{thm}\label{LHR00003}
Suppose that $f$ is regulated on $(a,b)$. Then $f^-$ and $f^+$ are regulated on $(a,b)$ and
\begin{equation}\label{LHR00004}
{(f^-)^-=(f^+)^-=f^-}\text{  and  }(f^+)^+=(f^-)^+=f^+.
\end{equation}
Furthermore, the following statements are equivalent:
\begin{enumerate}[leftmargin=.65cm]
\item $f^-$ is strictly increasing on $(a,b)$.
\item $f^+$ is strictly increasing on $(a,b)$.
\item $f^+(s)<f^-(t)$ for all $a<s<t<b$.
\end{enumerate}
\end{thm}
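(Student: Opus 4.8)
The plan is to establish the identities \eqref{LHR00004} first, because they simultaneously yield the regularity of $f^-$ and $f^+$. Indeed, the assertions $(f^-)^-=f^-$ and $(f^-)^+=f^+$ say precisely that both one-sided limits of $f^-$ exist at every point of $(a,b)$, so that $f^-$ is regulated; likewise $(f^+)^-=f^-$ and $(f^+)^+=f^+$ handle $f^+$. Thus the entire first half of the statement reduces to a single claim: for every $x\in(a,b)$, each of $f$, $f^-$, and $f^+$ has left-hand limit $f^-(x)$ and right-hand limit $f^+(x)$ at $x$.

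I would prove this by a direct $\varepsilon$--$\delta$ interchange of limits. Fix $x$ and $\varepsilon>0$, and using the existence of $f^-(x)=\lim_{t\uparrow x}f(t)$ choose $\delta>0$ so that $|f(s)-f^-(x)|<\varepsilon$ whenever $x-\delta<s<x$. For any $t\in(x-\delta,x)$, the points $s$ used to compute $f^-(t)=\lim_{s\uparrow t}f(s)$ and $f^+(t)=\lim_{s\downarrow t}f(s)$ eventually lie in $(x-\delta,x)$ -- this is the step requiring a little care, but it holds because $(x-\delta,x)$ is open and $t$ sits strictly inside it -- so passing to the limit in $|f(s)-f^-(x)|<\varepsilon$ gives $|f^-(t)-f^-(x)|\le\varepsilon$ and $|f^+(t)-f^-(x)|\le\varepsilon$. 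Hence $\lim_{t\uparrow x}f^-(t)=\lim_{t\uparrow x}f^+(t)=f^-(x)$, and the symmetric argument on the right yields the $f^+(x)$ identities. This proves \eqref{LHR00004} and the regularity of $f^-$ and $f^+$.

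For the equivalence I would run the argument through statement (3). The forward implications (1)$\Rightarrow$(3) and (2)$\Rightarrow$(3) combine monotonicity with the identities $(f^-)^+=f^+$ and $(f^+)^-=f^-$; for instance, if $f^-$ is strictly increasing then for $s<r<t$ one has $f^+(s)=\lim_{u\downarrow s}f^-(u)\le f^-(r)<f^-(t)$, which is (3). The key auxiliary fact for the reverse directions is that (3) forces the pointwise inequality $f^-(x)\le f^+(x)$: applying $f^+(u)<f^-(v)$ with $u<x<v$ and letting first $v\downarrow x$ and then $u\uparrow x$, while invoking $(f^-)^+=f^+$ and $(f^+)^-=f^-$, sandwiches $f^-(x)\le f^+(x)$. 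With this in hand, (3)$\Rightarrow$(2) is immediate from $f^+(s)<f^-(t)\le f^+(t)$, and (3)$\Rightarrow$(1) follows from the chain $f^-(s)\le f^-(r)\le f^+(r)<f^-(t)$ for any $r\in(s,t)$, where the strict step is the application of (3) to $r<t$. These four implications close the loop and give the equivalence of (1), (2), and (3).

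The routine parts are the interchange-of-limits estimates and the monotone chains. The one genuinely delicate point is the sandwiching step that upgrades (3) into $f^-\le f^+$, since it is exactly there that the identities \eqref{LHR00004} must be used to push the strict inequality of (3) out to the boundary limits; one must also track which inequalities in each chain remain strict, so that the conclusions (1) and (2) emerge with strict monotonicity rather than merely weak monotonicity.
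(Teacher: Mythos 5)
Your proof is correct: the $\varepsilon$--$\delta$ interchange-of-limits argument establishes all four identities in \eqref{LHR00004} (and hence the regularity of $f^-$ and $f^+$), and the equivalence argument routed through statement (3), using the auxiliary inequality $f^-\le f^+$ derived by letting $v\downarrow x$ and $u\uparrow x$ in $f^+(u)<f^-(v)$, is sound, with strictness preserved where needed. The paper states this theorem without proof, describing the reasoning as straightforward, and your argument is exactly the natural one it alludes to.
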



Definition \eqref{LHR00002} is adopted from measure theory, see for example Evans and Gariepy \cite{MR3409135} for more details. Also, see the introduction of Section \ref{LHR00029} and Theorem \ref{LHR00029.5}. This definition allows us to differentiate at a point $x$ even if $f$ and $\alpha$ are not continuous at $x$, which gives the differentiation process more flexibility. Using equations \eqref{LHR00004}, it is easy to verify that
\begin{equation}\label{LHR00004.5}
\left(D_{\alpha}f\right)(x)=\lim_{h\downarrow 0}\frac{f^+(x+h)-f^-(x-h)}{\alpha^+(x+h)-\alpha^-(x-h)}.
\end{equation}

The following are some properties that follow directly from Definition \eqref{LHR00002}. We assume that $-\infty\leq a<b\leq\infty$, that the functions $f$ and $g$ are regulated on $(a,b)$, and that $\alpha$ is strictly increasing on $(a,b)$, unless otherwise stated.

\begin{enumerate}[leftmargin=.65cm]
\item $D_{\alpha}f$, $D_{\alpha}f^-$, and $D_{\alpha}f^+$ are identical. This follows from Equations \eqref{LHR00004}. 
\item $\alpha^-(x)=\alpha^+(x)$ at some point $x\in(a,b)$ implies that $f^-(x)=f^+(x)$ provided that $\left(D_{\alpha}f\right)(x)$ exists.
\item If $\alpha^-(x)\neq\alpha^+(x)$ at some point $x\in(a,b)$, then
$$\left(D_{\alpha}f\right)(x)=\frac{f^+(x)-f^-(x)}{\alpha^+(x)-\alpha^-(x)}.$$
\item Suppose that $\alpha(t)=t$. If the classical one-sided derivatives $f'_-(x)$ and $f'_+(x)$ exist at each $x\in(a,b)$, then $D_{\alpha}f$ averages $f'_-$ and $f'_+$, i.e., $D_{\alpha}f=(f'_-+f'_+)/2$.
\item  If $\left(D_{\alpha}f\right)(x)$ exists at some point $x\in(a,b)$, it does not guarantee the existence of $f'_-(x)$, $f'_+(x)$, or $f'(x)$. For example if $\alpha(t)=t$ and $f(t)=\vert t\vert$, then $\left(D_{\alpha}f\right)(0)=0$ but $f'(0)$ does not exist. Also, if 
$$g(t)=\begin{cases}
t\sin(1/t), & t>0\\
0, & t=0\\
t\sin(1/t)+2t, & t<0,
\end{cases}$$
then $\left(D_{\alpha}g\right)(0)=1$, but neither $g'_-(0)$ nor $g'_+(0)$ exists.
\item If $f$ and $\alpha$ are differentiable everywhere on $(a,b)$, then
$\left(D_{\alpha}f\right)(x)=f'(x)/\alpha'(x)$ provided that $\alpha'(x)\neq0$. However, It is easy to construct a strictly increasing function $\alpha$ on $(a,b)$ for which $\alpha'(x)$ exists everywhere and that $\alpha'(x)=0$ on a set of positive Lebesgue measure. 

\item $D_{\alpha}f$ does not have the intermediate value property. For example if $f(t)=\vert t\vert$ and $\alpha(t)=t$, then there is no $x\in(-1,1)$ that satisfies $\left(D_{\alpha}f\right)(x)=1/2$ although $1/2$ is between $\left(D_{\alpha}f\right)(1)=1$ and $\left(D_{\alpha}f\right)(-1)=-1$.
\item If $\left(D_{\alpha}f\right)(x)$ and $\left(D_{\alpha}g\right)(x)$ exist at some point $x\in(a,b)$, then the product rule is given by
$$
\begin{aligned}
\left(D_{\alpha}fg\right)(x)&=g^-(x)\left(D_{\alpha}f\right)(x)+f^+(x)\left(D_{\alpha}g\right)(x)\\
&=g^+(x)\left(D_{\alpha}f\right)(x)+f^-(x)\left(D_{\alpha}g\right)(x).
\end{aligned}
$$
In addition, the quotient rule is given by
$$
\begin{aligned}
\left(D_{\alpha}\frac{f}{g}\right)(x)&=\frac{g^-(x)\left(D_{\alpha}f\right)(x)-f^-(x)\left(D_{\alpha}g\right)(x)}{g^-(x)g^+(x)}\\
&=\frac{g^+(x)\left(D_{\alpha}f\right)(x)-f^+(x)\left(D_{\alpha}g\right)(x)}{g^-(x)g^+(x)},
\end{aligned}
$$
provided that $g^-(x)g^+(x)\neq0$.
\end{enumerate}

\section{Mean Value Theorems}\label{LHR00005}

The goal of this section is to introduce an equivalent of Cauchy's mean value theorem that aligns with Definition \eqref{LHR00002}. We will use differentiation techniques for the proofs, although some results could also be established using Lebesgue-Stieltjes integrals.

Consider $f(x)=-2x$ when $x\leq 0$ and $f(x)=4x$ when $x>0$. Then $f(-2)=f(1)=4$, but with $\alpha(x)=x$, we get
$$\left(D_{\alpha}f\right)(x)=\begin{cases}
-2, & x<0\\
1, & x=0\\
4, & x>0.
\end{cases}$$
Thus $D_{\alpha}f$ does not vanish anywhere in $(-2,1)$, which means Rolle's theorem fails in this case. However, there are $u$ and $v$ in $(-2,1)$ such that $\left(D_{\alpha}f\right)(u)\left(D_{\alpha}f\right)(v)\leq 0$. We will see later on that this is true in general.

Before we begin, let us outline the assumptions needed for most of the upcoming results in the following hypothesis.

\begin{hyp}\label{LHR00006}\
\begin{enumerate}[leftmargin=.65cm]
\item $-\infty\leq a<b\leq\infty$.
\item $f$ is a regulated real-valued function on $(a,b)$.
\item $g$ is a regulated real-valued function on $(a,b)$.
\item $\alpha$ is a strictly increasing function on $(a,b)$.
\item $\left(D_{\alpha}f\right)(x)$ exist for all $x\in(a,b)$.
\item $\left(D_{\alpha}g\right)(x)$ exist for all $x\in(a,b)$.
\end{enumerate}
\end{hyp}

The following fact is the main tool needed to develop mean value theorems for regulated functions.

\begin{thm}\label{LHR00007}
Suppose that $f$ and $\alpha$ satisfy assumptions $1$, $2$, $4$, and $5$ in Hypothesis \ref{LHR00006}. The following statements hold:
\begin{enumerate}[leftmargin=.65cm]
\item If $D_{\alpha}f>0$ on $(a,b)$, then $f^+(x)<f^-(y)$ for all $x<y$ in $(a,b)$.
\item If $D_{\alpha}f\geq0$ on $(a,b)$, then $f^+(x)\leq f^-(y)$ for all $x<y$ in $(a,b)$.
\end{enumerate}
\end{thm}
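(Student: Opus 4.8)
The plan is to prove the two statements together by treating statement (1) as the substantial case and deducing statement (2) from it by a perturbation. For the deduction, I would use that $D_{\alpha}$ is linear and that $D_{\alpha}\alpha\equiv 1$ directly from \eqref{LHR00002}, so that for every $\epsilon>0$ the function $f_{\epsilon}=f+\epsilon\alpha$ satisfies $D_{\alpha}f_{\epsilon}=D_{\alpha}f+\epsilon>0$ whenever $D_{\alpha}f\geq 0$. Applying statement (1) to $f_{\epsilon}$ and using $(f+\epsilon\alpha)^{\pm}=f^{\pm}+\epsilon\alpha^{\pm}$ gives $f^+(x)+\epsilon\alpha^+(x)<f^-(y)+\epsilon\alpha^-(y)$ for all $x<y$; letting $\epsilon\downarrow 0$ yields $f^+(x)\leq f^-(y)$, which is statement (2). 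Thus the whole proof reduces to statement (1).

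For statement (1) I would first extract the local meaning of positivity. Since $\alpha$ is strictly increasing we have $\alpha^+(x-h)<\alpha^-(x+h)$ for every $h>0$, so the denominator in \eqref{LHR00002} is strictly positive; hence $\left(D_{\alpha}f\right)(x)>0$ forces a $\delta_x>0$ with $f^-(x+h)>f^+(x-h)$ for all $0<h<\delta_x$. Letting $h\downarrow 0$ here and invoking \eqref{LHR00004} produces the auxiliary fact that $f^-(x)\leq f^+(x)$ at every point, i.e.\ $f$ has no downward jumps. (Note that statement (1) is exactly condition (3) of Theorem \ref{LHR00003}, so proving it is the same as showing $f^-$ is strictly increasing.)

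Next I would set up a telescoping identity. Fix $x<y$, write $x=x_0<x_1<\dots<x_n=y$, and observe
\[
f^-(y)-f^+(x)=\sum_{i=1}^{n}\bigl(f^-(x_i)-f^+(x_{i-1})\bigr)+\sum_{i=1}^{n-1}\bigl(f^+(x_i)-f^-(x_i)\bigr).
\]
The second sum is nonnegative by the no-downward-jump fact, so it suffices to produce a single partition in which every increment $f^-(x_i)-f^+(x_{i-1})$ is positive. By the local consequence above, the increment over $[x_{i-1},x_i]$ is positive as soon as that interval is short relative to $\delta$ at its midpoint $m_i=(x_{i-1}+x_i)/2$, namely $x_i-x_{i-1}<2\delta_{m_i}$. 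So statement (1) follows once one knows that $[x,y]$ admits a partition that is fine in this midpoint-tagged sense.

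The existence of such a partition is exactly where the main difficulty lies, and it is the step I expect to fight with. Because the difference quotient in \eqref{LHR00002} is symmetric in $h$, positivity at a point controls only intervals \emph{centered} at that point, so what is required is a Cousin-type covering theorem for midpoint-tagged intervals rather than the usual endpoint-tagged one; the standard bisection argument, applied to a putative failure $f^-(y)\leq f^+(x)$, only yields a limit point together with violations of the symmetric inequality at scales tending to zero, but on intervals whose midpoints need not approach that point symmetrically, so the hypothesis $\left(D_{\alpha}f\right)(x^{*})>0$ at the limit point $x^{*}$ is not immediately contradicted. This local-to-global (equivalently, symmetric-to-monotone) passage is the crux, and I would attack it either by proving the centered covering lemma directly, exploiting the strict monotonicity of $\alpha$ and the inner one-sided limits in \eqref{LHR00004.5} to upgrade the centered control, or, as a reliable fallback consistent with the measure-theoretic reading of \eqref{LHR00002}, by passing to the representation $f^-(y)-f^+(x)=\int_{(x,y)}\left(D_{\alpha}f\right)\,d\mu_{\alpha}$ of Section \ref{LHR00029}, where $\mu_{\alpha}((x,y))>0$ because $\alpha$ is strictly increasing and positivity of the integrand delivers the strict inequality at once.
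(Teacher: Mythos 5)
Your reduction of statement (2) to statement (1) via $f_{\epsilon}=f+\epsilon\alpha$ is exactly the paper's argument, and your opening moves for statement (1) — strict positivity of the numerator in \eqref{LHR00002} for small $h$, the no-downward-jump inequality $f^-\leq f^+$ obtained by letting $h\downarrow 0$, and the correct telescoping identity — also match the paper's proof. But the proof is incomplete at precisely the step you flag as the crux: the existence of a partition $x=x_0<\dots<x_n=y$ that is fine with respect to the \emph{midpoints} $m_i$. This is not a routine Cousin lemma, and in fact the centered (symmetric-tagged) covering statement is false as you state it: full symmetric covers of an interval admit partitions only after deleting a countable exceptional set of endpoints (this is the content of the Preiss--Thomson symmetric covering theorem, a genuinely deep result). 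You neither prove the covering lemma nor route around the exceptional set, so statement (1) is not established.

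Your fallback does not repair this, because it is circular inside this paper. The representation $f^-(y)-f^+(x)=\int_{(x,y)}\left(D_{\alpha}f\right)d\alpha$ of Theorem \ref{LHR00029.5} requires $f$ to be of locally bounded variation and $df$ to be locally absolutely continuous with respect to $d\alpha$; a regulated function need not have bounded variation, and in this paper both facts are \emph{consequences} of Theorem \ref{LHR00007} (through Corollary \ref{LHR00016} and Theorem \ref{LHR00029.01}, while Theorem \ref{LHR00038} assumes $f$ is increasing — essentially the conclusion you are trying to prove). The paper instead closes the local-to-global gap with a self-contained supremum (``continuous induction'') argument that exploits the symmetric structure rather than fighting it: fix $z=(x+y)/2$ and let $s$ be the supremum of the radii $\epsilon$ such that $f^+(z-h)\leq f^-(z+h)$ for all $h\in(0,\epsilon]$; if $s<y-z$, then positivity of $D_{\alpha}f$ at the two points $z\pm s$, chained with the inequality already valid on the window $[z-s,z+s]$ and with $f^-\leq f^+$, extends the inequality to radii beyond $s$, contradicting maximality. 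This midpoint-propagation trick is the missing idea; once the weak inequality $f^+(x)\leq f^-(y)$ holds everywhere, strictness follows just as you would expect (equality at some pair forces $f^-$ to be constant on a subinterval, making $D_{\alpha}f$ vanish there). I recommend you replace both proposed attacks on the crux with an argument of this type.
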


\begin{proof}
It follows from the positivity of $D_{\alpha}f$ on $(a,b)$ that for each $t\in(a,b)$ there exists $\epsilon_t>0$ (depending on $t$) such that
\begin{equation}\label{LHR00008}
f^+(t-h)\leq f^-(t+h)
\end{equation}
for all $h\in(0,\epsilon_t)$. Letting $h\downarrow0$ gives
\begin{equation}\label{LHR00009}
f^-(t)\leq f^+(t)
\end{equation}
for all $t\in(a,b)$.

Let $x<y$ in $(a,b)$ be given and set $z=(x+y)/2$. Consider the set
$$S=\left\{\epsilon\in(0,y-z]:f^+(z-h)\leq f^-(z+h) \text{ for all }h\in(0,\epsilon]\right\}.$$
Note that $S$ is not empty since inequality \eqref{LHR00008} is valid when $t=z$. Let $s$ be the least upper bound of $S$. It is straightforward to verify that $s\in S$. We claim that $s=y-z$, which implies that $f^+(x)\leq f^-(y)$. To prove our claim assume to the contrary that $s<y-z$. Since $\left(D_{\alpha}f\right)(z-s)>0$ and $\left(D_{\alpha}f\right)(z+s)>0$, there exists $\gamma\in(0,\min\{y-z-s,s\})$ such that
\begin{equation}\label{LHR00010}
f^+(z-s-h)\leq f^-(z-s+h)
\end{equation}
and
\begin{equation}\label{LHR00011}
f^+(z+s-h)\leq f^-(z+s+h)
\end{equation}
for all $h\in(0,\gamma)$. If $r\in(s,s+\gamma)$ (see the figure below), then

$$
\begin{tikzpicture}

  \coordinate (x) at (0,0);
  
  \coordinate (z) at (6,0);

  \coordinate (y) at (12,0);
 
  \coordinate (z+s) at (9.5,0);
  
  \coordinate (z-s) at (2.5,0);
  
  \coordinate (z-r) at (1.5,0);
  
  \coordinate (z+r) at (10.5,0);
  
  \coordinate (z-2s+r) at (3.5,0);
  \coordinate (z+2s-r) at (8.5,0);  
  
  \draw[semithick] (x) -- (y);

  \filldraw (x) circle (1.5pt) node[below] {$x$};
  \filldraw (y) circle (1.5pt) node[below] {$y$};
  \filldraw (z) circle (1.5pt) node[below] {$z$};
  
  \filldraw (z+s) circle (1.5pt) node[below] {$z+s$};
  \filldraw (z-s) circle (1.5pt) node[below] {$z-s$};
    
  \filldraw (z-r) circle (1.5pt) node[above] {$z-r$};
  \filldraw (z+r) circle (1.5pt) node[above] {$z+r$};
  
  \filldraw (z-2s+r) circle (1.5pt) node[above] {$z-2s+r$}; 
   \filldraw (z+2s-r) circle (1.5pt) node[above] {$z+2s-r$};  
\end{tikzpicture}
$$ 

$$f^+(z-r)=f^+(z-s-(r-s))\leq f^-(z-s+(r-s))=f^-(z-(2s-r)),$$
where the inequality is justified by the validity of \eqref{LHR00010} when $h=r-s$. Inequality \eqref{LHR00009} informs us that $f^-(z-(2s-r))\leq f^+(z-(2s-r))$. But $2s-r\in(0,s)$, so
$$f^+(z-(2s-r))\leq f^-(z+(2s-r))=f^-(z+s-(r-s))$$
Again inequality \eqref{LHR00009} informs us that $f^-(z+s-(r-s))\leq f^+(z+s-(r-s))$. Finally, since $r-s\in(0,\gamma)$ it follows from inequality \eqref{LHR00011} that
$$f^+(z+s-(r-s))\leq f^-(z+s+(r-s))=f^-(z+r).$$
We conclude that $f^+(z-r)\leq f^-(z+r)$ for all $r\in(0,s+\epsilon_0)$, which contradicts $s$ being the least upper bound of $S$. This completes the proof of our claim.

We have shown so far that $f^+(x)\leq f^-(y)$ for all $x<y$ in $(a,b)$. If $f^+(x_0)=f^-(y_0)$ for some $x_0<y_0$ in $(a,b)$, then 
$$f^+(x_0)\leq f^-(t)\leq f^+(t)\leq f^-(y_0)$$ 
for all $t\in(x_0,y_0)$. This means $f^-$ is constant on $(x_0,y_0)$, implying that $D_{\alpha}f$ is identically zero on $(x_0,y_0)$. Hence $f^+(x)<f^-(y)$ for all $x<y$ in $(a,b)$, which completes the proof of the first statement.

To prove the second statement assume that $D_{\alpha}f\geq0$ on $(a,b)$ and let $x<y$ in $(a,b)$ be given. For each $\epsilon>0$ consider the function $\phi_{\epsilon}=\epsilon\alpha+f$. Obviously $D_{\alpha}\phi_{\epsilon}=\epsilon+D_{\alpha}f>0$. It follows from the first statement that
$$\epsilon\alpha^+(x)+f^+(x)<\epsilon\alpha^-(y)+f^-(y).$$
Letting $\epsilon\downarrow0$ gives $f^+(x)\leq f^-(y)$, which completes the proof.
\end{proof}

\begin{remark}
The first claim in Theorem \ref{LHR00007} is true if $\left(D_{\alpha}f\right)(x)\in(0,\infty]$ for all $x\in(a,b)$.  Similarly, the second claim is true if $\left(D_{\alpha}f\right)(x)\in[0,\infty]$.
\end{remark}

Now we are ready to present a generalization of Rolle's theorem that applies to regulated functions.

\begin{thm}[Rolle's Theorem for Regulated Functions]\label{LHR00012}
Suppose that $f$ and $\alpha$ satisfy assumptions $1$, $2$, $4$, and $5$ in Hypothesis \ref{LHR00006}. If $f^+(s)=f^-(t)$ for some $s<t$ in $(a,b)$, then there are $u$ and $v$ in $(s,t)$ such that
$\left(D_{\alpha}f\right)(u)\left(D_{\alpha}f\right)(v)\leq 0$.
\end{thm}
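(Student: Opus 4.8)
The plan is to argue by contradiction. Negating the conclusion means that $\left(D_{\alpha}f\right)(u)\left(D_{\alpha}f\right)(v)>0$ for every pair $u,v\in(s,t)$. Taking $u=v$ shows that $D_{\alpha}f$ never vanishes on $(s,t)$, and then the strict positivity of every such product rules out the coexistence of a positive value and a negative value, so $D_{\alpha}f$ must keep a single strict sign throughout $(s,t)$. It therefore suffices to derive a contradiction in the two cases $D_{\alpha}f>0$ on $(s,t)$ and $D_{\alpha}f<0$ on $(s,t)$. Moreover, replacing $f$ by $-f$ leaves assumptions $1$, $2$, $4$, and $5$ intact, sends $D_{\alpha}f$ to $-D_{\alpha}f$, and preserves the hypothesis in the form $(-f)^+(s)=(-f)^-(t)$; so it is enough to handle the single case $D_{\alpha}f>0$ on $(s,t)$.

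Assuming $D_{\alpha}f>0$ on $(s,t)$, I would apply Theorem \ref{LHR00007}(1) on the subinterval $(s,t)$ (the hypotheses are inherited by any subinterval) to obtain $f^+(x)<f^-(y)$ whenever $s<x<y<t$. By Theorem \ref{LHR00003} this is equivalent to $f^-$ being strictly increasing on $(s,t)$, which is the usable form for the next step.

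The main obstacle, and the only genuinely delicate point, is that Theorem \ref{LHR00007} produces strict inequalities only for points lying \emph{strictly inside} the interval, whereas the hypothesis $f^+(s)=f^-(t)$ concerns the endpoints $s$ and $t$. To bridge this gap I would fix any $x_0,y_0$ with $s<x_0<y_0<t$ and pass to the endpoints by identifying the one-sided limits of $f^-$ using \eqref{LHR00004}: namely $\lim_{x\downarrow s}f^-(x)=(f^-)^+(s)=f^+(s)$ and $\lim_{y\uparrow t}f^-(y)=(f^-)^-(t)=f^-(t)$. The monotonicity of $f^-$ then forces $f^+(s)\leq f^-(x_0)$ and $f^-(y_0)\leq f^-(t)$, while the interior estimate gives the strict inequality $f^-(x_0)<f^-(y_0)$. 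Chaining these yields $f^+(s)<f^-(t)$, which contradicts the assumed equality $f^+(s)=f^-(t)$ and completes the proof.
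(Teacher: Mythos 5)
Your proof is correct and follows essentially the same route as the paper: assume every product $\left(D_{\alpha}f\right)(u)\left(D_{\alpha}f\right)(v)$ is positive, deduce that $D_{\alpha}f$ keeps a single strict sign on $(s,t)$, and invoke Theorem \ref{LHR00007} to contradict $f^+(s)=f^-(t)$. The endpoint-limit argument you spell out --- passing from the interior inequalities to $f^+(s)<f^-(t)$ via the identities \eqref{LHR00004} --- is precisely the detail the paper leaves implicit when it cites Theorem \ref{LHR00007}, so your write-up is, if anything, more complete.
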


\begin{proof}
If $\left(D_{\alpha}f\right)(u)\left(D_{\alpha}f\right)(v)>0$ for all $u$ and $v$ in $(s,t)$, then either $D_{\alpha}f$ is positive on $(s,t)$ or $D_{\alpha}f$ is negative on $(s,t)$, which leads to $f^+(s)\neq f^-(t)$ according to Theorem \ref{LHR00007}. This completes the proof.
\end{proof}

Theorem \ref{LHR00012} generalizes the well-known Rolle's Theorem. To see this, if we assume that $f$ is differentiable on $(s,t)$ in the classical sense and that $f$ is continuous on $[s,t]$, then $f'=D_{\alpha}f$ on $(s,t)$, where $\alpha(x)=x$. It follows from Theorem \ref{LHR00012} that $f'(u)f'(v)\leq 0$ for some $u$ and $v$ in $(s,t)$. If $f'(u)f'(v)=0$, then we are done. Otherwise, the intermediate value property of derivatives assures the existence of $w$ between $u$ and $v$ for which $f'(w)=0$.

\begin{thm}[Cauchy’s Mean Value Theorem for Regulated Functions]\label{LHR00013}
Suppose that $f$, $g$, and $\alpha$ satisfy Hypothesis \ref{LHR00006}. If $s<t$ are in $(a,b)$, then there are $u$ and $v$ in $(s,t)$ such that the product of
\begin{equation}\label{LHR00014}
\left(g^-(t)-g^+(s)\right)\left(D_{\alpha}f\right)(u)-\left(f^-(t)-f^+(s)\right)\left(D_{\alpha}g\right)(u)
\end{equation}
and
\begin{equation}\label{LHR00015}
\left(g^-(t)-g^+(s)\right)\left(D_{\alpha}f\right)(v)-\left(f^-(t)-f^+(s)\right)\left(D_{\alpha}g\right)(v)
\end{equation}
is less than or equal to zero.
\end{thm}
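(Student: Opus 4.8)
The plan is to reproduce the classical proof of Cauchy's mean value theorem, but to reduce it to the regulated Rolle's theorem (Theorem \ref{LHR00012}) rather than the classical one. Fixing $s<t$ in $(a,b)$, I would introduce the auxiliary function
$$\psi=\left(g^-(t)-g^+(s)\right)f-\left(f^-(t)-f^+(s)\right)g,$$
a linear combination of $f$ and $g$ with \emph{constant} coefficients, hence regulated on $(a,b)$. The definition \eqref{LHR00002} is linear: since the one-sided limits split as $(f+g)^\pm=f^\pm+g^\pm$ and the difference quotients share the common denominator $\alpha^-(x+h)-\alpha^+(x-h)$, the derivative $D_\alpha\psi$ exists at every point of $(a,b)$ and equals $\left(g^-(t)-g^+(s)\right)D_\alpha f-\left(f^-(t)-f^+(s)\right)D_\alpha g$, using assumptions 5 and 6 of Hypothesis \ref{LHR00006}. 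Thus $\psi$ and $\alpha$ satisfy assumptions 1, 2, 4, and 5 demanded by Theorem \ref{LHR00012}.

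The crux of the argument is to check that $\psi$ meets the hypothesis of regulated Rolle's theorem on the subinterval, namely $\psi^+(s)=\psi^-(t)$. Because the coefficients are constant, $\psi^\pm=\left(g^-(t)-g^+(s)\right)f^\pm-\left(f^-(t)-f^+(s)\right)g^\pm$. Expanding $\psi^+(s)$ and $\psi^-(t)$, the ``square'' terms $g^+(s)f^+(s)$ and $f^-(t)g^-(t)$ cancel, and both expressions collapse to the same value $g^-(t)f^+(s)-f^-(t)g^+(s)$. This is the only computation requiring care; it is purely algebraic and is the exact analogue of the identity $h(a)=h(b)$ in the continuous proof. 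I do not expect a genuine obstacle here, but the design decision that makes it work is choosing the \emph{correct} one-sided limits when forming the constants: $f^+(s)$ and $g^+(s)$ at the left endpoint and $f^-(t)$, $g^-(t)$ at the right, so that the combination matches the $f^+(s)=f^-(t)$ form in which Theorem \ref{LHR00012} is stated.

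With $\psi^+(s)=\psi^-(t)$ established, Theorem \ref{LHR00012} furnishes $u,v\in(s,t)$ for which $\left(D_\alpha\psi\right)(u)\left(D_\alpha\psi\right)(v)\le 0$. Substituting the formula for $D_\alpha\psi$ shows that $\left(D_\alpha\psi\right)(u)$ is precisely the quantity in \eqref{LHR00014} and $\left(D_\alpha\psi\right)(v)$ is precisely \eqref{LHR00015}, so their product is at most zero, which is the assertion. The entire difficulty is therefore front-loaded into Theorem \ref{LHR00007} and its corollary Theorem \ref{LHR00012}; once the regulated Rolle's theorem is available, Cauchy's version follows by the standard linear-combination trick with the one-sided-limit bookkeeping made explicit above.
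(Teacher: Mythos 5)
Your proposal is correct and coincides with the paper's own proof: the same auxiliary function $h=\left(g^-(t)-g^+(s)\right)f-\left(f^-(t)-f^+(s)\right)g$, the same verification that $h^+(s)=h^-(t)=g^-(t)f^+(s)-g^+(s)f^-(t)$, and the same application of the regulated Rolle's theorem (Theorem \ref{LHR00012}) to produce $u$ and $v$. Your write-up merely makes explicit the linearity of $D_{\alpha}$ and the one-sided-limit bookkeeping that the paper leaves as an observation.
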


\begin{proof}
Define the regulated function $h$ on $(a,b)$ by
$$h(x)=\left(g^-(t)-g^+(s)\right)f(x)-\left(f^-(t)-f^+(s)\right)g(x).$$
Observe that $h^+(s)=h^-(t)=g^-(t)f^+(s)-g^+(s)f^-(t)$ and $\left(D_{\alpha}h\right)(x)$ exists for all $x\in(a,b)$. It follows from Theorem \ref{LHR00012} that $\left(D_{\alpha}h\right)(u)\left(D_{\alpha}h\right)(v)\leq 0$ for some $u$ and $v$ in $(s,t)$ which completes the proof.
\end{proof}

\begin{cor}\label{LHR00016}
Suppose that $f$, $g$, and $\alpha$ satisfy Hypothesis \ref{LHR00006}. In addition, suppose that either $D_{\alpha}g$ is positive on $(a,b)$ or $D_{\alpha}g$ is negative on $(a,b)$. The following statements hold:
\begin{enumerate}[leftmargin=.65cm]
\item If $s<t$ are in $(a,b)$, then there are $u$ and $v$ in $(s,t)$ such that
$$
\frac{\left(D_{\alpha}f\right)(u)}{\left(D_{\alpha}g\right)(u)}\leq\frac{f^-(t)-f^+(s)}{g^-(t)-g^+(s)}\leq\frac{\left(D_{\alpha}f\right)(v)}{\left(D_{\alpha}g\right)(v)}.$$
\item If $\left(D_{\alpha}f\right)/\left(D_{\alpha}g\right)$ is increasing on $(a,b)$, then for any $s<t$ in $(a,b)$ we have
$$\frac{\left(D_{\alpha}f\right)(s)}{\left(D_{\alpha}g\right)(s)}\leq\frac{f^-(t)-f^+(s)}{g^-(t)-g^+(s)}\leq\frac{\left(D_{\alpha}f\right)(t)}{\left(D_{\alpha}g\right)(t)}.$$
\end{enumerate}
\end{cor}

\begin{proof}
To prove the first statement note that Theorem \ref{LHR00013} assures the existence of $u$ and $v$ in $(s,t)$ such that the product of the expressions \ref{LHR00014} and \ref{LHR00015} is less than or equal to zero. Since $D_{\alpha}g$ does not vanish anywhere in $(a,b)$, we may assume that
$$
\frac{\left(D_{\alpha}f\right)(u)}{\left(D_{\alpha}g\right)(u)}\leq\frac{\left(D_{\alpha}f\right)(v)}{\left(D_{\alpha}g\right)(v)}.$$
Since $D_{\alpha}g$ is either positive or negative on $(a,b)$, it follows from Theorem \ref{LHR00007} that $\left(D_{\alpha}g\right)(u)$, $\left(D_{\alpha}g\right)(v)$, and $g^-(t)-g^+(s)$ have the same sign. Therefore 
$$\left(\frac{\left(D_{\alpha}f\right)(u)}{\left(D_{\alpha}g\right)(u)}-\frac{f^-(t)-f^+(s)}{g^-(t)-g^+(s)}\right)\left(\frac{\left(D_{\alpha}f\right)(v)}{\left(D_{\alpha}g\right)(v)}-\frac{f^-(t)-f^+(s)}{g^-(t)-g^+(s)}\right)\leq 0,$$
which completes the proof of the first statement.

Observe that since $\left(D_{\alpha}f\right)/\left(D_{\alpha}g\right)$ is increasing on $(a,b)$ we have
$$\frac{\left(D_{\alpha}f\right)(s)}{\left(D_{\alpha}g\right)(s)}\leq\frac{\left(D_{\alpha}f\right)(u)}{\left(D_{\alpha}g\right)(u)}\text{  and  }\frac{\left(D_{\alpha}f\right)(v)}{\left(D_{\alpha}g\right)(v)}\leq\frac{\left(D_{\alpha}f\right)(t)}{\left(D_{\alpha}g\right)(t)},$$
which completes the proof of the second statement.
\end{proof}

We will introduce a measure-theoretic application to the above mean value theorems in Section \ref{LHR00029}, see Theorem \ref{LHR00029.01}.

\section{L'Hospital-Type Rules}\label{LHR00016.5}

\begin{thm}[L'Hospital's Rule for Regulated Functions]\label{LHR00017}
Suppose that $f$, $g$, and $\alpha$ satisfy Hypothesis \ref{LHR00006}. In addition, suppose that $D_{\alpha}g$ is either positive or negative on $(a,b)$ and that
\begin{equation}\label{LHR00018}
\lim_{x\uparrow b}\frac{\left(D_{\alpha}f\right)(x)}{\left(D_{\alpha}g\right)(x)}=A,
\end{equation}
where $A\in[-\infty,\infty]$. If $\lim_{x\uparrow b}f^-(x)=\lim_{x\uparrow b}g^-(x)=0$, $\lim_{x\uparrow b}g^-(x)=-\infty$, or $\lim_{x\uparrow b}g^-(x)=\infty$, then
\begin{equation}\label{LHR00019}
\lim_{x\uparrow b}\frac{f^-(x)}{g^-(x)}=A.
\end{equation}
\end{thm}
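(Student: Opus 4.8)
The plan is to transport the classical argument — which runs through Cauchy's mean value theorem — to the regulated setting, using Corollary \ref{LHR00016}(1) as the substitute for Cauchy's theorem. First I would normalize: I may assume $D_{\alpha}g>0$ on $(a,b)$, since replacing the pair $(g,A)$ by $(-g,-A)$ turns a negative $D_{\alpha}g$ into a positive one and preserves the equivalence between \eqref{LHR00018} and \eqref{LHR00019} (both $D_{\alpha}f/D_{\alpha}g$ and $f^-/g^-$ simply change sign), while interchanging the hypotheses $g^-(x)\to+\infty$ and $g^-(x)\to-\infty$. In particular, in the infinite case I may assume $g^-(x)\to+\infty$, which is forced once $D_{\alpha}g>0$ because then $g^-$ is increasing and cannot tend to $-\infty$. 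Note also that $D_{\alpha}g>0$ gives, via Theorem \ref{LHR00007}, $g^+(s)<g^-(t)$ for all $s<t$, so every denominator $g^-(t)-g^+(s)$ is strictly positive. The value $A=\pm\infty$ I would treat at the end by the same estimates, keeping only the relevant one-sided inequality.

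The engine of the proof is the following consequence of Corollary \ref{LHR00016}(1): given $\epsilon>0$, use \eqref{LHR00018} to pick $c\in(a,b)$ with $A-\epsilon<(D_{\alpha}f)(x)/(D_{\alpha}g)(x)<A+\epsilon$ for all $x\in(c,b)$. Then for every $c<s<t<b$ the points $u,v\in(s,t)$ produced by the corollary lie in $(c,b)$, so both ends of the sandwich are within $\epsilon$ of $A$, whence
\[
A-\epsilon\le\frac{f^-(t)-f^+(s)}{g^-(t)-g^+(s)}\le A+\epsilon .
\]
For the infinite case $g^-(x)\to+\infty$, I would fix $s=c_0\in(c,b)$ and, once $t$ is close enough to $b$ that $g^-(t)>g^+(c_0)$, multiply through by the positive quantity $g^-(t)-g^+(c_0)$ and divide by $g^-(t)>0$. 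Letting $t\uparrow b$, the contributions $g^+(c_0)/g^-(t)$ and $f^+(c_0)/g^-(t)$ vanish, yielding $A-\epsilon\le\liminf_{t\uparrow b}f^-(t)/g^-(t)$ and $\limsup_{t\uparrow b}f^-(t)/g^-(t)\le A+\epsilon$; as $\epsilon$ is arbitrary this is \eqref{LHR00019}. This case is clean precisely because dividing by the blowing-up $g^-(t)$ renders the fixed endpoint terms negligible.

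The $0/0$ case is where the genuine obstacle sits. Fixing $s$ and letting $t\uparrow b$ in the displayed inequality, and using $f^-(t),g^-(t)\to0$, I obtain in the limit
\[
A-\epsilon\le\frac{f^+(s)}{g^+(s)}\le A+\epsilon\qquad\text{for every }s\in(c,b),
\]
where $g^+(s)<0$ makes the quotient well defined. The difficulty is that this controls $f^+/g^+$, while \eqref{LHR00019} asks about $f^-/g^-$; dividing by $g^-(t)\to0$ directly is not an option, so the one-sided values genuinely differ.

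To bridge this gap — the main technical point — I would argue pointwise on $(c,b)$. At every point where $\alpha$ is continuous one has $f^-(s)=f^+(s)$ and $g^-(s)=g^+(s)$, so the two quotients coincide and $f^-(s)/g^-(s)\in[A-\epsilon,A+\epsilon]$. At each of the countably many discontinuity points of $\alpha$, the elementary properties of $D_{\alpha}$ established in Section \ref{qw682jhc42l,k529b} identify $(D_{\alpha}f)(s)/(D_{\alpha}g)(s)$ with the jump ratio $(f^+(s)-f^-(s))/(g^+(s)-g^-(s))$, which also lies in $[A-\epsilon,A+\epsilon]$. Writing $f^-(s)=f^+(s)-(f^+(s)-f^-(s))$ and $g^-(s)=g^+(s)-(g^+(s)-g^-(s))$, and using the signs $g^+(s)<0$ and $g^+(s)-g^-(s)>0$, an elementary mediant estimate places $f^-(s)/g^-(s)$ in $[A-\epsilon,A+\epsilon]$ as well. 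Hence $\lvert f^-(s)/g^-(s)-A\rvert\le\epsilon$ for all $s\in(c,b)$, giving \eqref{LHR00019}. I expect exactly this passage from $f^+/g^+$ to $f^-/g^-$ to be the crux of the argument; the cases $A=\pm\infty$ follow the same scheme with the corresponding one-sided bounds retained throughout.
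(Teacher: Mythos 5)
Your proof is correct, and its skeleton is the same as the paper's: Corollary \ref{LHR00016}(1) plays the role of Cauchy's theorem, your sandwich $A-\epsilon\le\bigl(f^-(t)-f^+(s)\bigr)/\bigl(g^-(t)-g^+(s)\bigr)\le A+\epsilon$ for $c<s<t<b$ is exactly the paper's inequality \eqref{LHR00021} phrased with $\epsilon$'s instead of $\inf R(s,b)$ and $\sup R(s,b)$, and your treatment of the case $g^-\to\infty$ coincides with the paper's. The one step where you genuinely depart is the bridge you single out as the crux: passing from control of $f^+/g^+$ to the conclusion \eqref{LHR00019} about $f^-/g^-$ in the $0/0$ case. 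The paper handles this softly, via item 4 of the remark following the theorem: by \eqref{LHR00004}, $(\beta^-)^+=\beta^+$ and $(\beta^+)^-=\beta^-$ for any regulated $\beta$, so $\beta^+$ and $\beta^-$ have the same limiting behavior as $x\uparrow b$; applied to $\beta=f^-/g^-$ (legitimate since $g^-$ and $g^+$ are nonvanishing here), this converts $f^+/g^+\to A$ directly into \eqref{LHR00019}. Your pointwise argument --- splitting $(c,b)$ into continuity points of $\alpha$, where the two quotients coincide by property 2 of Section \ref{qw682jhc42l,k529b}, and jump points, where property 3 identifies $(D_{\alpha}f)(s)/(D_{\alpha}g)(s)$ with the jump ratio and a mediant estimate (valid because the two denominators $g^+(s)$ and $-(g^+(s)-g^-(s))$ have the same sign) pins down $f^-(s)/g^-(s)$ --- is also correct, and it even yields the stronger uniform statement $\lvert f^-(s)/g^-(s)-A\rvert\le\epsilon$ on all of $(c,b)$ rather than only the limit. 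What the paper's route buys is brevity and uniformity in $A$: the $\inf/\sup$ formulation needs no modification when $A=\pm\infty$, whereas your $\epsilon$-sandwich and mediant step must be rerun with one-sided bounds, which you acknowledge but do not carry out; those adjustments are routine, so this is a presentational gap, not a mathematical one.
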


\begin{proof}
For each $s<t$ in $[a,b]$ we define the set
\begin{equation}
R(s,t)=\left\{\frac{\left(D_{\alpha}f\right)(x)}{\left(D_{\alpha}g\right)(x)}:x\in(s,t)\right\}.
\end{equation}
We denote the infimum and the supremum of $R(s,t)$ by $\inf R(s,t)$ and $\sup R(s,t)$, respectively. Note that $\inf R(s,t)$ and $\sup R(s,t)$ belong to $[-\infty,\infty]$. The first statement in Corollary \ref{LHR00016} informs us that
\begin{equation}\label{LHR00020}
\inf R(s,t)\leq\frac{f^-(t)-f^+(s)}{g^-(t)-g^+(s)}\leq\sup R(s,t)
\end{equation}
for any $s<t$ in $(a,b)$. Since $\inf R(s,b)\leq\inf R(s,t)$ and $\sup R(s,t)\leq\sup R(s,b)$ for all $s<t$ in $(a,b)$ we have
\begin{equation}\label{LHR00021}
\inf R(s,b)\leq\frac{f^-(t)-f^+(s)}{g^-(t)-g^+(s)}\leq\sup R(s,b).
\end{equation}

Now assume that $\lim_{x\uparrow b}f^-(x)=\lim_{x\uparrow b}g^-(x)=0$. By letting $t\uparrow b$ in inequality \eqref{LHR00021} we obtain 
\begin{equation}\label{LHR00022}
\inf R(s,b)\leq\frac{f^+(s)}{g^+(s)}\leq\sup R(s,b).
\end{equation}
Then letting $s\uparrow b$ in inequality \eqref{LHR00022} proves the limit \eqref{LHR00019}.

Now assume that $\lim_{x\uparrow b}g^-(x)=\infty$. We may assume that $g^-$ is positive on $(a,b)$. Note that this forces $D_{\alpha}g$ to be positive in $(a,b)$ since it is either positive or negative on $(a,b)$. Therefore $g^+(s)<g^-(t)$ for all $s<t$ in $(a,b)$. We rewrite inequality \eqref{LHR00021} as
\begin{equation}
\left(1-\frac{g^+(s)}{g^-(t)}\right)\inf R(s,b)\leq\frac{f^-(t)}{g^-(t)}-\frac{f^+(s)}{g^-(t)}\leq\left(1-\frac{g^+(s)}{g^-(t)}\right)\sup R(s,b).
\end{equation}
By taking $\liminf$ and $\limsup$ as $t\uparrow b$ we obtain
$$\inf R(s,b)\leq\liminf_{t\uparrow b}\frac{f^-(t)}{g^-(t)}\leq\sup R(s,b)$$
and 
$$\inf R(s,b)\leq\limsup_{t\uparrow b}\frac{f^-(t)}{g^-(t)}\leq\sup R(s,b),$$
respectively. Then letting $s\uparrow b$ gives us \eqref{LHR00019} which completes the proof of the infinity case. The negative infinity case is just a variation of the infinity case, so this completes the proof of the theorem.
\end{proof}

\begin{remark}\
\begin{enumerate}[leftmargin=.65cm]
\item A similar version of Theorem \ref{LHR00017} works at the endpoint $a$.
\item Theorem \ref{LHR00017} is valid if the fourth assumption in Hypothesis \ref{LHR00006} is replaced by $\alpha$ is strictly decreasing on $(a,b)$.
\item Assumption \eqref{LHR00018} in Theorem \ref{LHR00018} can be written as $\lim_{x\uparrow b}\left(D_gf\right)(x)=A$. However, if $f$ and $g$ are functions for which $\lim_{x\uparrow b}\left(D_gf\right)(x)=A$, then it is not necessarily true that $\lim_{x\uparrow b}f^-(x)/g^-(x)=A$. For example, if we consider the functions $f(x)=1+h(x)$ and $g(x)=2+h(x)$, where $h(x)=\sin(\sqrt{x})$, then $\left(D_gf\right)(x)=1$ when $x>0$ but $\lim_{x\uparrow b}f(x)/g(x)$ does not exist. This tells us that the existence of the function $\alpha$ that satisfies the last three assumptions in Hypothesis \ref{LHR00006} is necessary for Theorem \ref{LHR00017}.
\item For any regulated function $\beta$ on $(a,b)$, $\beta^-(x)\rightarrow A$ as $x\uparrow b$ if and only if $\beta^+(x)\rightarrow A$ as $x\uparrow b$. In other words, $\beta^-$ and $\beta^+$ have the same end behavior as $x\uparrow b$. This means our conclusion \eqref{LHR00019} in Theorem \ref{LHR00017} is equivalent to $\lim_{x\uparrow b}f^+(x)/g^+(x)=A$.
\end{enumerate}
\end{remark}

\begin{thm}[L'Hospital's Monotone Rule]\label{LHR00023}
Suppose that $f$, $g$, and $\alpha$ satisfy Hypothesis \ref{LHR00006}, that $D_{\alpha}g$ is either positive or negative on $(a,b)$, and that either $\lim_{x\uparrow b}g^-(x)=\lim_{x\uparrow b}f^-(x)=0$ or $\lim_{x\downarrow a}g^-(x)=\lim_{x\downarrow a}f^-(x)=0$. The following statements hold:
\begin{enumerate}[leftmargin=.65cm]
\item If $\left(D_{\alpha}f\right)/\left(D_{\alpha}g\right)$ is increasing on $(a,b)$, then $f^-/g^-$ is increasing on $(a,b)$.
\item If $\left(D_{\alpha}f\right)/\left(D_{\alpha}g\right)$ is strictly increasing on $(a,b)$, then $f^-/g^-$ is strictly increasing on $(a,b)$.
\end{enumerate} 
\end{thm}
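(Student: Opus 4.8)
The plan is to reduce the monotonicity of $f^-/g^-$ to the single sign condition $D_{\alpha}(f/g)\geq 0$ and then read off the conclusion from Theorem \ref{LHR00007}. I will carry out the case $\lim_{x\downarrow a}f^-(x)=\lim_{x\downarrow a}g^-(x)=0$; the case with the vanishing limits at $b$ is entirely analogous, using the second form of the quotient rule together with a limit $t\uparrow b$ in place of the limit $r\downarrow a$ used below.

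First I would record that $g^-$ and $g^+$ are sign-definite on $(a,b)$. Assuming $D_{\alpha}g>0$ (the case $D_{\alpha}g<0$ is symmetric), Theorem \ref{LHR00007} makes $g^-$ strictly increasing; since $\lim_{x\downarrow a}g^-(x)=0$ this forces $g^-(x)>0$, and then $g^+(x)\geq g^-(x)>0$ for every $x\in(a,b)$. In particular $g^-(x)g^+(x)>0$ throughout, so $f/g$ is regulated with $(f/g)^-=f^-/g^-$ and $(f/g)^+=f^+/g^+$, and the quotient rule for $D_{\alpha}$ established in the preliminaries applies at every point.

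The crux is the pointwise bound
$$\frac{f^-(x)}{g^-(x)}\leq\frac{\left(D_{\alpha}f\right)(x)}{\left(D_{\alpha}g\right)(x)},\qquad x\in(a,b).$$
To obtain it I would apply the right-hand inequality of the second statement in Corollary \ref{LHR00016} to a pair $r<x$, giving
$$\frac{f^-(x)-f^+(r)}{g^-(x)-g^+(r)}\leq\frac{\left(D_{\alpha}f\right)(x)}{\left(D_{\alpha}g\right)(x)},$$
and then let $r\downarrow a$; since $f^+(r)\to 0$ and $g^+(r)\to 0$ (the one-sided limits $f^-,f^+$ share the same endpoint behavior, and likewise for $g$) and $g^-(x)\neq 0$, the left-hand side tends to $f^-(x)/g^-(x)$. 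Feeding this bound into the quotient rule, the numerator of $\left(D_{\alpha}\frac{f}{g}\right)(x)$ factors as
$$g^-(x)\left(D_{\alpha}g\right)(x)\left(\frac{\left(D_{\alpha}f\right)(x)}{\left(D_{\alpha}g\right)(x)}-\frac{f^-(x)}{g^-(x)}\right),$$
where the parenthesized factor is $\geq 0$ by the bound and $g^-(x)\left(D_{\alpha}g\right)(x)>0$ by sign-definiteness; as the denominator $g^-(x)g^+(x)$ is positive, this yields $D_{\alpha}(f/g)\geq 0$ on $(a,b)$. Theorem \ref{LHR00007} then gives $(f/g)^+(x)\leq(f/g)^-(y)$ for $x<y$, and combined with $(f/g)^-\leq(f/g)^+$ (immediate from properties (2) and (3) of $D_{\alpha}$) this shows that $f^-/g^-=(f/g)^-$ is increasing, proving the first statement.

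For the second statement I would argue by contradiction using the first. If $f^-/g^-$ failed to be strictly increasing, then $(f/g)^-(s)=(f/g)^-(t)$ for some $s<t$; since $(f/g)^-$ is already increasing it must be constant on $[s,t]$, and exactly as in the proof of Theorem \ref{LHR00007} this forces $(f/g)^-=(f/g)^+$ to be constant and $D_{\alpha}(f/g)\equiv 0$ on $(s,t)$. But $D_{\alpha}(f/g)=0$ makes the parenthesized factor above vanish, so $\left(D_{\alpha}f\right)/\left(D_{\alpha}g\right)=f^-/g^-$ on $(s,t)$, which is then constant there, contradicting the strict monotonicity of $\left(D_{\alpha}f\right)/\left(D_{\alpha}g\right)$. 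I expect the main obstacle to be the pointwise bound together with the attendant sign bookkeeping: the four sub-cases arising from the sign of $D_{\alpha}g$ and from whether the vanishing condition is imposed at $a$ or at $b$ each demand that the orientation of every inequality, including the positivity of $g^-(x)g^+(x)$ and of $g^-(x)\left(D_{\alpha}g\right)(x)$, be tracked carefully.
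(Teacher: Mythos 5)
Your proof is correct and follows essentially the same route as the paper's: both obtain the pointwise bound $f^-/g^-\leq\left(D_{\alpha}f\right)/\left(D_{\alpha}g\right)$ (respectively the version with $f^+/g^+$ at the other endpoint) by passing to the limit in the second statement of Corollary \ref{LHR00016}, feed it into the quotient rule to conclude $D_{\alpha}(f/g)\geq 0$, invoke Theorem \ref{LHR00007} for monotonicity of $(f/g)^-$, and settle strictness by the same constancy-on-a-subinterval contradiction. The only difference is presentational: the paper writes out both endpoint cases via the two forms of the quotient rule, whereas you do one case in detail and correctly note the other is symmetric.
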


\begin{proof}
Since $D_{\alpha}g$ is either positive or negative on $(a,b)$, Theorems \ref{LHR00003} and \ref{LHR00007} inform us that either $g^-$ and $g^+$ are strictly increasing on $(a,b)$ or $g^-$ and $g^+$ are strictly decreasing on $(a,b)$. Hence the hypothesis that either $\lim_{x\uparrow b}g^-(x)=0$ or $\lim_{x\downarrow a}g^-(x)=0$ implies that either $g^-$ and $g^+$ are positive on $(a,b)$ or $g^-$ and $g^+$ are negative on $(a,b)$. We conclude that if $\lim_{x\uparrow b}g^-(x)=0$, then $\left(D_{\alpha}g\right)/g^+<0$ on $(a,b)$ and that if $\lim_{x\downarrow a}g^-(x)=0$, then $\left(D_{\alpha}g\right)/g^+>0$ on $(a,b)$.

Observe that the quotient rule can be written as
\begin{equation}
\begin{aligned}
\left(D_{\alpha}\frac{f}{g}\right)(x)
&
=\frac{\left(D_{\alpha}g\right)(x)}{g^+(x)}\left(\frac{\left(D_{\alpha}f\right)(x)}{\left(D_{\alpha}g\right)(x)}-\frac{f^-(x)}{g^-(x)}\right)\\
&
=\frac{\left(D_{\alpha}g\right)(x)}{g^-(x)}\left(\frac{\left(D_{\alpha}f\right)(x)}{\left(D_{\alpha}g\right)(x)}-\frac{f^+(x)}{g^+(x)}\right).
\end{aligned}
\end{equation}
Now suppose that $\left(D_{\alpha}f\right)/\left(D_{\alpha}g\right)$ is increasing on $(a,b)$. The second statement in Corollary \ref{LHR00016} informs us that for any $s<t$ in $(a,b)$ we have
$$\frac{\left(D_{\alpha}f\right)(s)}{\left(D_{\alpha}g\right)(s)}\leq\frac{f^-(t)-f^+(s)}{g^-(t)-g^+(s)}\leq\frac{\left(D_{\alpha}f\right)(t)}{\left(D_{\alpha}g\right)(t)}.$$
If $\lim_{x\uparrow b}g^-(x)=\lim_{x\uparrow b}f^-(x)=0$, then
$$\frac{\left(D_{\alpha}f\right)(s)}{\left(D_{\alpha}g\right)(s)}\leq\frac{f^+(s)}{g^+(s)},$$
for all $s\in(a,b)$, implying that $(D_{\alpha}f/g)\geq0$. On the other hand if $\lim_{x\downarrow a}g^-(x)=\lim_{x\downarrow a}f^-(x)=0$, then
$$\frac{f^-(t)}{g^-(t)}\leq\frac{\left(D_{\alpha}f\right)(t)}{\left(D_{\alpha}g\right)(t)},$$
for all $t\in(a,b)$, implying that $(D_{\alpha}f/g)\geq0$.
It follows from Theorem \ref{LHR00007} that $f^-/g^-$ is increasing on $(a,b)$.

Finally, if $\left(D_{\alpha}f\right)/\left(D_{\alpha}g\right)$ is strictly increasing on $(a,b)$, then it follows from the first part of the proof that $f^-/g^-$ is increasing on $(a,b)$. Now if $f^-/g^-$ is not strictly increasing, then $f^-/g^-=C$ (constant) on some open interval subset $J$ of $(a,b)$, which implies that $\left(D_{\alpha}f\right)/\left(D_{\alpha}g\right)=C$ on $J$. This contradiction completes the proof of the second statement.
\end{proof}

\begin{remark}\
\begin{enumerate}[leftmargin=.65cm]
\item If $\left(D_{\alpha}f\right)/\left(D_{\alpha}g\right)$ is strictly decreasing (respectively decreasing) on $(a,b)$, then $f^-/g^-$ strictly decreasing (respectively decreasing) on $(a,b)$.
\item If $f$, $g$, and $\alpha$ satisfy the assumptions of Theorem \ref{LHR00023} but either $\lim_{x\uparrow b}g^-(x)=B$ and $\lim_{x\uparrow b}f^-(x)=A$ or $\lim_{x\downarrow a}g^-(x)=B$ and $\lim_{x\downarrow a}f^-(x)=A$ where $A$ and $B$ are real numbers, then the function 
$$\frac{f^--A}{g^--B}$$
is strictly increasing (respectively increasing) on $(a,b)$ if $(D_{\alpha}f)/(D_{\alpha}g)$ is strictly increasing (respectively increasing) on $(a,b)$. This can be shown by applying Theorem \ref{LHR00023} to the functions $f_0=f-A$ and $g_0=g-B$.
\end{enumerate}
\end{remark}

\section{Applications}\label{LHR00024}

\subsection{L'Hospital's Rule via Right-Hand Derivatives}\label{LHR00025}

In this subsection, we explore an interesting special case of Theorem \ref{LHR00018}. It is well-known that L'Hospital's rule works for one-sided derivatives, see Theorems 1, 2, and 3 in Nester and Vyborny \cite{Vyborny1989}. Theorem 1 states that if $f$ and $g$ are continuous functions on the interval $(a,b)$, where $-\infty\leq a<b\leq\infty$, $g$ is monotonic on $(a,b)$, $f(x)$ and $g(x)$ tend to zero as $x\uparrow b$, and $f'_+(x)/g'_+(x)$ tends to $L$ as $x\uparrow b$, then $f(x)/g(x)$ tends to $L$ as $x\uparrow b$.
This theorem assumes that $g'_+(x)\neq 0$ at least near $b$. Also, it works when $g(x)$ tends to $\pm\infty$ as $x\uparrow b$. In addition, a similar version works if we consider left derivatives instead of right derivatives. A potential failure of this theorem happens when we keep getting zeros of the left derivative and the right derivative as $x\uparrow b$. Theorem \ref{LHR00018} offers us an alternative if we assume that at each $x\in(a,b)$, at least one of $g'_-(x)$ and $g'_+(x)$ is not zero. This is because $D_{\alpha}g$ averages $g'_-$ and $g'_+$ when $\alpha(t)=t$, as we mentioned in the fourth statement in Section \ref{qw682jhc42l,k529b}.

To illustrate this we consider an example. Let
$$g(x)=\begin{cases}
A_0+x^2(x-2)^2, & 0<x\leq1\\
A_1+x^4, & 1<x\leq2\\
A_2+(x-2)^2(x-4)^2, & 2<x\leq3\\
A_3+x^4, & 3<x\leq4\\
A_4+(x-4)^2(x-6)^2, & 4<x\leq5\\
\vdots
\end{cases}$$
where $A_0=A_1=0$ and $A_2,A_3,\dots$ are the unique numbers that make $g$ continuous on $(0,\infty)$. That is, $A_{2n}=A_{2n-1}+(2n)^4$ and $A_{2n+1}=A_{2n}+1-(2n+1)^4$, for each natural number $n$. Our goal is to check whether we can use a version of L'Hospital's rule to determine the behavior of $f(x)/g(x)$ as $x\uparrow\infty$, where $f(x)=g(x+1)$. Note that both $f$ and $g$ tend to $\infty$ as $x\uparrow\infty$. It is straightforward to verify that $g'_-(n)=4n^3$ and $g'_+(n)=0$ when $n$ is even, $g'_-(n)=0$ and $g'_+(n)=4n^3$ when $n$ is odd, $g'(x)=4x^3$ when $2n+1<x<2n+2$, and $g'(x)=4(x-2n)(x-2n-1)(x-2n-2)$ when $2n<x<2n+1$. Since as we head to $\infty$ we keep getting zero derivatives either from left or right, Theorem $1$ in \cite{Vyborny1989} does not work. Now with $\alpha(x)=x$ we have $\left(D_{\alpha}g\right)(n)=2n^3$ for all $n$, that is, the average of $g'_-(n)$ and $g'_+(n)$. Then it easy to verify that $\left(D_{\alpha}f\right)(x)/\left(D_{\alpha}g\right)(x)$ tends to $1$ as $x\uparrow\infty$. Consequently, Theorem \ref{LHR00017} tells us that $f(x)/g(x)$ tends to $1$ as $x\uparrow\infty$.

\subsection{An Example Involving Non-Differentiability Near a Limiting Point}\label{LHR00026}

L'Hospital's rule \eqref{LHR00001} fails if there is a sequence of points $s_n$ approaching the limiting point at which $f'(s_n)$ or $g'(s_n)$ does not exist. In this subsection we provide an example to show that Theorem \ref{LHR00017} can be used in such cases.

Consider the functions
$$\alpha(x)=\begin{cases}
\sqrt[3]{x-1}, & 0<x\leq 2\\
2+\sqrt[3]{x-3}, & 2<x\leq 4\\
4+\sqrt[3]{x-5}, & 4<x\leq 6\\
6+\sqrt[3]{x-7}, & 6<x\leq 8\\
\vdots
\end{cases}
$$
and $f(x)=\sqrt[3]{x}$. Our goal is to check whether L'Hospital's rule is applicable to compute the limit of the quotient $f(x)/\alpha(x)$ as $x\uparrow\infty$. The function $\alpha$ is continuous and strictly increasing on $(0,\infty)$, but $\alpha'$ does not exist precisely at each odd number. It is straightforward to verify that if $x\in(2n,2n+2]$, where $n$ is a non-negative integer, then
$$\left(D_{\alpha}f\right)(x)=\sqrt[3]{\left(1-\frac{2n+1}{x}\right)^2}.$$
Since $\left(D_{\alpha}f\right)(x)$ tends to $0$ as $x\uparrow\infty$, Theorem \ref{LHR00017} tells us that $f(x)/\alpha(x)$ tends to $0$ as $x\uparrow\infty$.

\subsection{Discrete and Continuous Versions of L'Hospital's Rule}\label{LHR00027}

The classical L'Hospital's rule is a special case of Theorem \ref{LHR00017}, which can be shown directly by considering $\alpha(x)=x$. On the other hand, the discrete version of L'Hospital's rule, known as the Stolz-Cesaro theorem, is also a special case of Theorem \ref{LHR00017}. To see this, we will state the Stolz-Cesaro theorem and then prove it using Theorem \ref{LHR00017}.

\begin{thm}[Stolz–Cesaro Theorem]
Suppose that $f_n$ is a sequence of real numbers and that $g_n$ is a sequences of positive real numbers such that
$$\lim_{n\uparrow\infty}\frac{f_n}{g_n}=A,$$
where $A\in[-\infty,\infty]$. Let 
$$F_n=\sum_{j=1}^nf_j \text{  and  } G_n=\sum_{j=1}^n g_n.$$
If $\lim_{n\uparrow\infty}G_n=\infty$, then
$$\lim_{n\uparrow\infty}\frac{F_n}{G_n}=A.$$
\end{thm}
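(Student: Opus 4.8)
The plan is to realize the two sequences as a quotient of continuous, piecewise-linear functions on $(0,\infty)$ and then invoke the infinity case of Theorem \ref{LHR00017} with $\alpha(x)=x$ and $b=\infty$. Concretely, I would set $F_0=G_0=0$ and, for each integer $n\geq1$, let $f$ and $g$ be the linear interpolants on $[n-1,n]$ joining $(n-1,F_{n-1})$ to $(n,F_n)$ and $(n-1,G_{n-1})$ to $(n,G_n)$, respectively. Both functions are continuous, hence regulated, with $f^-=f$ and $g^-=g$, and $\alpha(x)=x$ is strictly increasing; thus assumptions $1$--$4$ of Hypothesis \ref{LHR00006} hold. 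Since $f(n)=F_n$ and $g(n)=G_n$, it suffices to prove $\lim_{x\uparrow\infty}f(x)/g(x)=A$ and then restrict to the subsequence $x=n$.

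To check the remaining assumptions I would use that, with $\alpha(t)=t$, the operator $D_{\alpha}$ averages the one-sided derivatives (the fourth property listed in Section \ref{qw682jhc42l,k529b}). Because $f$ and $g$ are piecewise linear, their one-sided derivatives exist everywhere: on the interior of $(n-1,n)$ both one-sided slopes equal $f_n$ (resp.\ $g_n$), while at an integer $n$ the left and right slopes are $f_n,f_{n+1}$ (resp.\ $g_n,g_{n+1}$). Hence
$$\left(D_{\alpha}f\right)(x)=f_n,\qquad\left(D_{\alpha}g\right)(x)=g_n\quad\text{on the interior of }(n-1,n),$$
and
$$\left(D_{\alpha}f\right)(n)=\frac{f_n+f_{n+1}}{2},\qquad\left(D_{\alpha}g\right)(n)=\frac{g_n+g_{n+1}}{2}.$$
In particular both derivatives exist throughout $(0,\infty)$, and since every $g_n>0$ we have $D_{\alpha}g>0$ everywhere, so $D_{\alpha}g$ is positive as required. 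Moreover $g^-=g$ is strictly increasing with $g(n)=G_n\to\infty$, so $\lim_{x\uparrow\infty}g^-(x)=\infty$, placing us squarely in the infinity case of Theorem \ref{LHR00017}.

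It then remains to verify the hypothesis \eqref{LHR00018}, that $\left(D_{\alpha}f\right)/\left(D_{\alpha}g\right)\to A$ as $x\uparrow\infty$. On the open intervals this quotient is precisely $f_n/g_n\to A$, whereas at the integers it equals the mediant $(f_n+f_{n+1})/(g_n+g_{n+1})$, and I expect this integer case to be the only point requiring genuine (if minor) care. Writing the mediant as $\bigl(g_n\cdot\tfrac{f_n}{g_n}+g_{n+1}\cdot\tfrac{f_{n+1}}{g_{n+1}}\bigr)/(g_n+g_{n+1})$ exhibits it as a convex combination of $f_n/g_n$ and $f_{n+1}/g_{n+1}$ with positive weights, so it lies between these two ratios and therefore also tends to $A$ (the same trapping handles $A=\pm\infty$, using $f_n>Mg_n$ eventually). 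With \eqref{LHR00018} confirmed, Theorem \ref{LHR00017} yields $\lim_{x\uparrow\infty}f^-(x)/g^-(x)=A$; evaluating along $x=n$ gives $\lim_{n\uparrow\infty}F_n/G_n=A$, completing the argument.
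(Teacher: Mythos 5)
Your proposal is correct and follows essentially the same route as the paper: realize $F_n$ and $G_n$ as continuous piecewise-linear interpolants, take $\alpha(x)=x$ so that $D_{\alpha}$ gives the slope on open intervals and the average of the one-sided slopes (hence the mediant quotient) at integers, and apply the infinity case of Theorem \ref{LHR00017}. The only difference is cosmetic: you justify the convergence of the mediant $(f_n+f_{n+1})/(g_n+g_{n+1})$ via a convex-combination argument, a step the paper asserts without proof.
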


\begin{proof}
For each nonnegative integer $m$ we define $F$ on the interval $[m,m+1]$ by
$$F(x)=f_{m+2}\;x+\sum_{j=1}^{m+1}f_j-mf_{m+2},$$
i.e., $F$ is the line segment joining the points
$$\left(m,\sum_{j=1}^{m+1}f_j\right) \text{  and  }\left (m+1,\sum_{j=1}^{m+2}f_j\right).$$
We define $G$ in a similar fashion using the sequence $g_n$. Obviously $F$ and $G$ are continuous on $[0,\infty)$. By choosing $\alpha(x)=x$, it is straightforward to check that $\left(D_{\alpha}F\right)(x)=f_{m+2}$ on $(m,m+1)$ and $\left(D_{\alpha}F\right)(m+1)=(f_{m+2}+f_{m+3})/2$. Similarly, $\left(D_{\alpha}G\right)(x)=g_{m+2}$ on $(m,m+1)$ and $\left(D_{\alpha}G\right)(m+1)=(g_{m+2}+g_{m+3})/2$. Since $f_m/g_m$ tends to $A$ as $m\uparrow\infty$ the sequence $\left(f_{m+2}+f_{m+3}\right)/\left(g_{m+2}+g_{m+3}\right)$ tends to $A$ as $m\uparrow\infty$. Therefore $\left(D_{\alpha}F\right)(x)/\left(D_{\alpha}G\right)(x)$ tends to $A$ as $x\uparrow\infty$. It follows from Theorem \ref{LHR00017} that $F(x)/G(x)$ tends to $A$ as $x\uparrow\infty$, which implies that $F_n/G_n$ tends to $A$ as $n\uparrow\infty$.
\end{proof}

\subsection{An Example Involving Discontinuities}\label{LHR00028}

For the functions
$$f(x)=\begin{cases}
x, & 0<x\leq 1\\
x+1+1/2, & 1<x\leq 2\\
x+2+1/2+1/3, & 2<x\leq 3\\
x+3+1/2+1/3+1/4, & 3<x\leq 4\\
\vdots
\end{cases}\text{  and  } \alpha(x)=\begin{cases}
x, & 0<x\leq 1\\
x+1, &1<x\leq 2\\
x+2, &2<x\leq 3\\
x+3, &3<x\leq 4\\
\vdots
\end{cases}
$$
the goal is to check whether L'Hospital's rule is applicable for the quotient $f(x)/\alpha(x)$ as $x\uparrow\infty$. Observe that $\alpha(x)$ tends to $\infty$ as $x\uparrow\infty$. In addition, the function $\alpha$ is strictly increasing on $(0,\infty)$. Obviously, $\left(D_{\alpha}f\right)(n)=1+1/(n+1)$ for $n=1,2,\dots$ and $\left(D_{\alpha}f\right)(x)=1$ otherwise. Note that $D_{\alpha}\alpha=1$. Since
$\left(D_{\alpha}f\right)(x)$ tends to $1$ as $x\uparrow\infty$, it follows from Theorem \ref{LHR00017} that $f(x)/\alpha(x)$ tends to $1$ as $x\uparrow\infty$.

\section{L'Hospital-Type Rules via Lebesgue-Stieltjes Integrals}\label{LHR00029}

The goal of this section is to present measure-theoretic versions of L'Hospital's rule and L'Hospital's monotone rule. Before we begin, let us review some key facts and notations from measure theory.

Suppose that $-\infty\leq a<b\leq\infty$. The Borel $\sigma$-algebra on $(a,b)$, denoted by $\mathbb{B}((a,b))$, is the $\sigma$-algebra generated by all open interval subsets of $(a,b)$. A real-valued function $h$ of locally bounded variation on $(a,b)$ induces a Lebesgue-Stieltjes measure on $(s,t)$, for each $a<s<t<b$, which we denote by $dh$. The measure of $(s,t)$ is $dh\left((s,t)\right)=h^-(t)-h^+(s)$. For any $S\subset(a,b)$, the measure of $S$ is
$$dh(S)=\inf\left\{\sum_{j\in J}\left(h^-(t_j)-h^+(s_j)\right):S\subset\bigcup_{j\in J}(s_j,t_j), J\text{ is countable}\right\}.$$
Recall that $dh$ is countably additive on $\mathbb{B}((s,t))$, for any $a<s<t<b$.

If $\alpha$ is increasing on $(a,b)$, then $d\alpha$ is a positive measure. In this case, $d\alpha$ is countably additive on $\mathbb{B}((a,b))$. A measurable function $f:(a,b)\to\mathbb{R}$ is said to be integrable with respect to $d\alpha$ if
$$\int_{(a,b)}|f|d\alpha<\infty.$$
The function $f$ is said to be locally integrable on $(a,b)$ if it is integrable on every interval $(s,t)$, where $a<s<t<b$. Also, it is integrable near $b$ if it is integrable on $(x_0,b)$ for some $x_0\in(a,b)$.

Suppose that $\alpha$ is increasing on $(a,b)$ and $h$ is of locally bounded variation on $(a,b)$. We say that $dh$ is locally absolutely continuous with respect to $d\alpha$ if $d\alpha(S)=0$ implies that $dh(S)=0$ as long as $S\in\mathbb{B}\left((s,t)\right)$ for some $a<s<t<b$.

Before moving on, we introduce the following application to the mean value theorems introduced in Section \ref{LHR00005}.

\begin{thm}\label{LHR00029.01}
Suppose that $f$ and $\alpha$ satisfy assumptions $1$, $2$, $4$, and $5$ in Hypothesis \ref{LHR00006} and that $D_{\alpha}f$ is locally bounded on $(a,b)$. The following statements hold:
\begin{enumerate}[leftmargin=0.65cm]
\item If $s<t$ in $(a,b)$, then there exists $M>0$ such that for all $x<y$ in $[s,t]$ we have $|f^-(y)-f^+(x)|\leq M(\alpha^-(y)-\alpha^+(x))$.
\item $f^-$ is of locally bounded variation on $(a,b)$.
\item $df$ is locally absolutely continuous with respect to $d\alpha$ on $(a,b)$.
\end{enumerate} 
\end{thm}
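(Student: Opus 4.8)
The plan is to prove the three statements in sequence, using the first as the engine for the other two. The key observation is that local boundedness of $D_\alpha f$ means: for each compact subinterval $[s,t]\subset(a,b)$ there is a constant $M>0$ with $|(D_\alpha f)(x)|\le M$ for all $x\in(s,t)$. This is precisely the quantitative Lipschitz-type estimate I want in statement (1), so the natural strategy is to deduce (1) directly from Theorem~\ref{LHR00007} by a comparison-function argument, and then read off (2) and (3) as essentially formal consequences of (1).

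\textbf{Proving statement (1).} First I would fix $s<t$ in $(a,b)$ and let $M>0$ be a bound for $|D_\alpha f|$ on a slightly larger interval containing $[s,t]$ (local boundedness gives this on any compact subinterval). The idea is to apply Theorem~\ref{LHR00007} to the two auxiliary functions $\psi_1 = M\alpha - f$ and $\psi_2 = M\alpha + f$. Since $\alpha$ is strictly increasing and $|D_\alpha f|\le M$ on the relevant interval, I compute $D_\alpha \psi_1 = M - D_\alpha f \ge 0$ and $D_\alpha\psi_2 = M + D_\alpha f \ge 0$ there. The second statement of Theorem~\ref{LHR00007} then yields, for all $x<y$ in $[s,t]$,
\begin{equation*}
\psi_1^+(x)\le\psi_1^-(y)\quad\text{and}\quad\psi_2^+(x)\le\psi_2^-(y).
\end{equation*}
Rewriting these using $\psi_i^\pm = M\alpha^\pm \pm f^\pm$ gives $f^-(y)-f^+(x)\le M(\alpha^-(y)-\alpha^+(x))$ from $\psi_1$ and $-(f^-(y)-f^+(x))\le M(\alpha^-(y)-\alpha^+(x))$ from $\psi_2$; together these are exactly $|f^-(y)-f^+(x)|\le M(\alpha^-(y)-\alpha^+(x))$. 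A small care point is that $D_\alpha\psi_i\ge 0$ must hold at \emph{every} point of the interval, not just on $(s,t)$, so I would take the bound $M$ on a closed subinterval strictly containing $[s,t]$ to be safe, or simply note that the endpoints $s,t$ can be absorbed by shrinking.

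\textbf{Proving statements (2) and (3).} For (2), I would partition any $[s,t]$ by points $s=x_0<x_1<\cdots<x_n=t$ and bound the variation sum of $f^-$: each increment $|f^-(x_{k})-f^-(x_{k-1})|$ is controlled, via statement (1) applied on $[x_{k-1},x_k]$ together with the two-sided limit relations in \eqref{LHR00004}, by $M(\alpha^-(x_k)-\alpha^+(x_{k-1}))\le M(\alpha(x_k)-\alpha(x_{k-1}))$ since $\alpha$ is increasing. Summing telescopes to $M(\alpha(t)-\alpha(s))$, a bound independent of the partition, so $f^-$ has bounded variation on $[s,t]$ and hence locally bounded variation on $(a,b)$. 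For (3), once $f^-$ is of locally bounded variation the measure $df$ exists, and statement (1) is exactly the hypothesis needed for absolute continuity: if $S\in\mathbb{B}((s,t))$ has $d\alpha(S)=0$, then covering $S$ by countably many intervals $(s_j,t_j)$ with $\sum_j(\alpha^-(t_j)-\alpha^+(s_j))$ arbitrarily small forces $\sum_j|f^-(t_j)-f^+(s_j)|\le M\sum_j(\alpha^-(t_j)-\alpha^+(s_j))$ to be arbitrarily small as well, whence $df(S)=0$ by the definition of the Lebesgue--Stieltjes outer measure recalled at the start of this section.

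The main obstacle is the careful handling of one-sided limits when passing between the pointwise inequality of statement (1) and the measure-theoretic conclusions. The estimate in (1) mixes $f^-$ at the right endpoint with $f^+$ at the left endpoint (and likewise for $\alpha$), so in the variation and covering arguments I must consistently invoke the identities \eqref{LHR00004} to convert between $f^-$, $f^+$, and the interval-measure formula $df((s_j,t_j))=f^-(t_j)-f^+(s_j)$; getting these endpoint conventions to line up cleanly is where the bookkeeping lives. Everything else is a direct application of Theorem~\ref{LHR00007} and the outer-measure definition already stated in this section.
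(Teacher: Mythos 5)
Your proof of statement (1) is correct, but it takes a different route from the paper's. The paper gets the two-sided bound in one step from Corollary \ref{LHR00016}: taking $g=\alpha$ (so that $D_\alpha g=D_\alpha\alpha\equiv 1>0$ on all of $(a,b)$), the first statement of that corollary produces $u,v\in(x,y)$ with
$$\left(D_{\alpha}f\right)(u)\leq\frac{f^-(y)-f^+(x)}{\alpha^-(y)-\alpha^+(x)}\leq\left(D_{\alpha}f\right)(v),$$
and the bound $|D_\alpha f|\le M$ on $[s,t]$ finishes. You instead apply Theorem \ref{LHR00007}(2) to the comparison functions $M\alpha\pm f$, which is the same perturbation trick the paper uses inside the proof of Theorem \ref{LHR00007} itself (the functions $\phi_\epsilon=\epsilon\alpha+f$); it bypasses the Rolle/Cauchy mean value machinery entirely and is self-contained, at the small cost of having to enlarge the interval so that $D_\alpha(M\alpha\pm f)\ge 0$ holds on an open interval containing $[s,t]$ --- a point you handle correctly. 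Note also that the paper proves only statement (1) and asserts that (2) and (3) ``follow from it''; you actually attempt those deductions.

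However, your derivation of statement (2) rests on a false inequality. You claim $|f^-(x_k)-f^-(x_{k-1})|\le M(\alpha^-(x_k)-\alpha^+(x_{k-1}))\le M(\alpha(x_k)-\alpha(x_{k-1}))$. This fails whenever a partition point is a common jump of $f$ and $\alpha$: the left-hand side sees the jump of $f$ at $x_{k-1}$, while $\alpha^-(x_k)-\alpha^+(x_{k-1})$ deliberately excludes the jump of $\alpha$ there. Concretely, take $f=\alpha$ with $\alpha(x)=x$ for $x<0$ and $\alpha(x)=x+1$ for $x\ge 0$, so that $D_\alpha f\equiv 1$ and $M=1$; with $x_{k-1}=0$ and $x_k=1$ one gets $|f^-(1)-f^-(0)|=2$, while $\alpha^-(1)-\alpha^+(0)=\alpha(1)-\alpha(0)=1$. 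The same example with $s=0$, $t=1$ shows that even the telescoped bound $M(\alpha(t)-\alpha(s))$ fails. The repair is the limiting argument you allude to but do not carry out: for $x'<x_{k-1}$, statement (1) gives $|f^-(x_k)-f^+(x')|\le M(\alpha^-(x_k)-\alpha^+(x'))$, and letting $x'\uparrow x_{k-1}$, using $(f^+)^-=f^-$ and $(\alpha^+)^-=\alpha^-$ from \eqref{LHR00004}, yields the correct increment bound $|f^-(x_k)-f^-(x_{k-1})|\le M(\alpha^-(x_k)-\alpha^-(x_{k-1}))$, which telescopes to $M(\alpha^-(t)-\alpha^-(s))<\infty$ and gives (2). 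Your covering argument for (3) is sound as written, since it only invokes statement (1) on the open covering intervals with the matching endpoint conventions $f^-(t_j)-f^+(s_j)$ and $\alpha^-(t_j)-\alpha^+(s_j)$.
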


\begin{proof}
We will prove only the first claim, as the last two claims follow from it. Fix $s<t$ in $(a,b)$. Since $D_{\alpha}f$ is locally bounded on $(a,b)$, there exists $M>0$ such that $|D_{\alpha}f|\leq M$ on $[s,t]$. Let $x<y$ in $[s,t]$ be given. Then the first statement in Corollary \ref{LHR00016} informs us that there are $u$ and $v$ in $(x,y)$ such that
$$\left(D_{\alpha}f\right)(u)\leq\frac{f^-(y)-f^+(x)}{\alpha^-(y)-\alpha^+(x)}\leq\left(D_{\alpha}f\right)(v).$$
But $-M\leq\left(D_{\alpha}f\right)(u)\leq\left(D_{\alpha}f\right)(v)\leq M$, so $|f^-(y)-f^+(x)|\leq M(\alpha^-(y)-\alpha^+(x))$, which completes the proof.
\end{proof}

The following result is a special case of the differentiation theorem for Radon measures, see Theorem 1.30 in Evans and Gariepy \cite{MR3409135}.

\begin{thm}\label{LHR00029.5}
Suppose that $\alpha$ is increasing on $(a,b)$ and that $h$ is of locally bounded variation on $(a,b)$.
\begin{enumerate}[leftmargin=.65cm]
\item The derivative $D_{\alpha}h$ exists almost everywhere on $(a,b)$ with respect to $d\alpha$.
\item If $dh$ is locally absolutely continuous with respect to $d\alpha$, then
$D_{\alpha}h$ is locally integrable on $(a,b)$ with respect to $d\alpha$ and
$$h^-(t)-h^+(s)=\int_{(s,t)}\left(D_{\alpha}h\right)d\alpha$$
for any $s<t$ in $(a,b)$.
\end{enumerate}
\end{thm}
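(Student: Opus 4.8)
The plan is to recognize $D_\alpha h$ as the derivative of the signed measure $dh$ with respect to the positive measure $d\alpha$ in the sense of Radon measures, and then to quote Theorem 1.30 in \cite{MR3409135}. Since $h$ is of locally bounded variation, $dh$ is a signed Radon measure on $(a,b)$; since $\alpha$ is increasing, $d\alpha$ is a positive Radon measure. Both are locally finite, so the differentiation theorem applies on each subinterval $(s,t)$ with $a<s<t<b$, and since the conclusions are local in nature, this suffices.

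First I would make the identification explicit. With the conventions recalled at the start of this section, for $x\in(a,b)$ and small $r>0$ the open interval $B=(x-r,x+r)$ satisfies $dh(B)=h^-(x+r)-h^+(x-r)$ and $d\alpha(B)=\alpha^-(x+r)-\alpha^+(x-r)$. Hence the difference quotient in the definition \eqref{LHR00002} of $(D_\alpha h)(x)$ is exactly $dh(B)/d\alpha(B)$, giving
$$
(D_\alpha h)(x)=\lim_{r\downarrow 0}\frac{dh\big((x-r,x+r)\big)}{d\alpha\big((x-r,x+r)\big)}.
$$
In $\mathbb{R}$ the balls centered at $x$ are precisely these symmetric intervals, so $D_\alpha h$ is the Radon-measure derivative of $dh$ with respect to $d\alpha$.

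With this in hand the first statement is immediate: Theorem 1.30 in \cite{MR3409135} guarantees that the limit above exists and is finite $d\alpha$-almost everywhere. For the second statement, local absolute continuity of $dh$ with respect to $d\alpha$ forces the singular part of the Lebesgue decomposition to vanish on each $(s,t)$, so $D_\alpha h$ agrees $d\alpha$-a.e.\ with the Radon-Nikodym density of $dh$ with respect to $d\alpha$. That density is locally integrable, and integrating it over $(s,t)$ returns the measure of the interval, yielding
$$
\int_{(s,t)}\left(D_\alpha h\right)\,d\alpha=dh\big((s,t)\big)=h^-(t)-h^+(s).
$$

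The one point demanding care is the mismatch between the open-interval quotient coming from \eqref{LHR00002} and the closed-ball convention typically used to state the differentiation theorem. The two quotients differ only through the boundary masses $dh(\{x\pm r\})$ and $d\alpha(\{x\pm r\})$; since each measure has at most countably many atoms, these vanish for all but countably many radii, and in any event the limit equals the same Radon-Nikodym density $d\alpha$-almost everywhere under either convention. Reconciling these conventions is the only nontrivial step, as everything else is a direct transcription of the cited theorem into the notation of $D_\alpha$.
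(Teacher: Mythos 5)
Your proposal is correct and takes essentially the same route as the paper: the paper offers no proof at all beyond citing Theorem 1.30 of Evans--Gariepy \cite{MR3409135} as the differentiation theorem for Radon measures, which is exactly the theorem you invoke, and your explicit identification of the quotient in \eqref{LHR00002} with $dh\bigl((x-r,x+r)\bigr)/d\alpha\bigl((x-r,x+r)\bigr)$ supplies the details the paper leaves implicit. One small refinement: the open-interval versus closed-ball reconciliation is more cleanly justified by continuity from below (an open interval is an increasing union of closed balls, so an estimate of the form $\left|dh(B)-L\,d\alpha(B)\right|\le\epsilon\,d\alpha(B)$ valid for all small closed balls passes directly to open intervals) than by counting atomic radii, since agreement of the two quotients along co-countably many radii does not by itself give existence of the limit over all radii.
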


Now we are ready to present a Lebesgue-Stieltjes versions of L'Hospital's rule and L'Hospital's monotone rule.

\begin{thm}[L'Hospital's Rule via Lebesgue-Stieltjes Integrals]\label{LHR00030}
Suppose that $-\infty\leq a<b\leq\infty$ and let $\alpha$ be a strictly increasing function on $(a,b)$. Let $u$ and $v$ be real-valued functions on $(a,b)$ that are locally integrable on $(a,b)$ with respect to $d\alpha$ such that
either $v>0$ almost everywhere on $(a,b)$ with respect to $d\alpha$ or $v<0$ almost everywhere on $(a,b)$ with respect to $d\alpha$. In addition, suppose that $w:(a,b)\to\mathbb{R}$ satisfies $u/v=w$ almost everywhere with respect to $d\alpha$ and 
$$\lim_{x\uparrow b}w(x)=A,$$
where $A\in[-\infty,\infty]$. The following statements hold:
\begin{enumerate}[leftmargin=.65cm]
\item If $u$ and $v$ are integrable near $b$ with respect to $d\alpha$, then
$$\lim_{x\uparrow b}\frac{\int_{(x,b)}ud\alpha}{\int_{(x,b)}vd\alpha}=A.$$
\item If either $\int_r^bvd\alpha=\infty$ or $\int_r^bvd\alpha=-\infty$, where $r$ is any point in $(a,b)$, then
$$\lim_{x\uparrow b}\frac{\int_{(r,x)}ud\alpha}{\int_{(r,x)}vd\alpha}=A.$$
\end{enumerate}
\end{thm}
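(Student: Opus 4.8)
The plan is to reduce both statements to a single measure-theoretic mean value inequality and then run the same squeezing argument used in the proof of Theorem \ref{LHR00017}, with the essential range of $w$ playing the role of the set $R(s,t)$. Throughout I may assume $v>0$ almost everywhere with respect to $d\alpha$, the other case being symmetric. For $a<s<t<b$ write $m(s,t)$ and $M(s,t)$ for the essential infimum and essential supremum of $w$ on $(s,t)$ taken with respect to $d\alpha$. The first step is to establish
\begin{equation*}
m(s,t)\le\frac{\int_{(s,t)}u\,d\alpha}{\int_{(s,t)}v\,d\alpha}\le M(s,t).
\end{equation*}
Since $u=wv$ almost everywhere with respect to $d\alpha$ and $v>0$, we have $m(s,t)\,v\le u\le M(s,t)\,v$ almost everywhere on $(s,t)$; integrating over $(s,t)$ and dividing by $\int_{(s,t)}v\,d\alpha$, which is strictly positive because $\alpha$ is strictly increasing (so $d\alpha((s,t))=\alpha^-(t)-\alpha^+(s)>0$) and $v>0$ almost everywhere, yields the claim. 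Here $m(s,t)$ and $M(s,t)$ are understood as elements of $[-\infty,\infty]$, the inequality being vacuous on whichever side the bound is infinite.

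For the first statement I would apply this inequality on $(x,b)$, which is legitimate since $u$ and $v$ are integrable near $b$ and $\int_{(x,b)}v\,d\alpha>0$, obtaining
\begin{equation*}
m(x,b)\le\frac{\int_{(x,b)}u\,d\alpha}{\int_{(x,b)}v\,d\alpha}\le M(x,b)
\end{equation*}
for $x$ near $b$. Because $w(x)\to A$ as $x\uparrow b$ in the ordinary pointwise sense, for every neighbourhood of $A$ there is $x_0$ with $w$ mapping $(x_0,b)$ into that neighbourhood; since the essential infimum and supremum lie between the pointwise infimum and supremum, both $m(x,b)$ and $M(x,b)$ converge to $A$ as $x\uparrow b$, and this also covers $A=\pm\infty$. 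Squeezing then delivers the limit $A$.

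For the second statement assume $\int_r^b v\,d\alpha=\infty$, so that $V_x:=\int_{(s,x)}v\,d\alpha\to\infty$ as $x\uparrow b$ for each fixed $s\in(r,b)$. Fix such an $s$, split $(r,x)=(r,s]\cup(s,x)$, and write $c_u=\int_{(r,s]}u\,d\alpha$ and $c_v=\int_{(r,s]}v\,d\alpha$, both finite by local integrability. Applying the mean value inequality on $(s,x)\subseteq(s,b)$ gives $m(s,b)\,V_x\le\int_{(s,x)}u\,d\alpha\le M(s,b)\,V_x$, hence, after dividing by $\int_{(r,x)}v\,d\alpha=c_v+V_x>0$,
\begin{equation*}
\frac{c_u+m(s,b)\,V_x}{c_v+V_x}\le\frac{\int_{(r,x)}u\,d\alpha}{\int_{(r,x)}v\,d\alpha}\le\frac{c_u+M(s,b)\,V_x}{c_v+V_x}.
\end{equation*}
Letting $x\uparrow b$ and using $V_x\to\infty$, the outer quotients tend to $m(s,b)$ and $M(s,b)$, so
\begin{equation*}
m(s,b)\le\liminf_{x\uparrow b}\frac{\int_{(r,x)}u\,d\alpha}{\int_{(r,x)}v\,d\alpha}\le\limsup_{x\uparrow b}\frac{\int_{(r,x)}u\,d\alpha}{\int_{(r,x)}v\,d\alpha}\le M(s,b).
\end{equation*}
Finally letting $s\uparrow b$ forces $m(s,b)\to A$ and $M(s,b)\to A$ as before, pinning both the limit inferior and the limit superior to $A$; the case $\int_r^b v\,d\alpha=-\infty$ is a trivial variant.

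I expect the main obstacle to be technical rather than conceptual: carefully reconciling the pointwise hypothesis $w(x)\to A$ with the essential infimum and supremum that arise naturally when integrating the almost-everywhere identity $u=wv$, and keeping the estimates valid when $A=\pm\infty$, where $m(s,b)$ or $M(s,b)$ becomes infinite and the quotient bounds must be read one-sidedly. It is also worth noting why one cannot simply invoke Theorem \ref{LHR00017}: antidifferentiating $u$ and $v$ via Theorem \ref{LHR00029.5} produces functions whose $D_{\alpha}$-derivatives agree with $u$ and $v$ only almost everywhere, whereas Hypothesis \ref{LHR00006} demands that these derivatives exist at every point, so a direct reduction is unavailable and the integral mean value inequality above must be proved on its own.
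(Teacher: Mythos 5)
Your proof is correct and takes essentially the same route as the paper's: your mean value inequality $m(s,t)\le \int_{(s,t)}u\,d\alpha\,/\int_{(s,t)}v\,d\alpha\le M(s,t)$ is a repackaging of the paper's key step of integrating the a.e.\ bound $|u-Av|<\epsilon|v|$ over $(x,b)$ and using sign-definiteness of $v$ to pull the absolute value outside, and your splitting of $(r,x)$ at $s$ with $V_x\to\infty$ washing out the head is exactly the paper's additivity argument for extending statement (2) from $r\ge x_0$ to arbitrary $r$. The only real difference is that your essential-inf/sup squeeze treats $A=\pm\infty$ on the same footing as finite $A$, whereas the paper writes out only the finite case and declares the infinite cases similar.
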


\begin{proof}
We will give a proof for the case $A\in(-\infty,\infty)$. The cases $A=\mp\infty$ can be treated similarly. Let $\epsilon>0$ be given. Then there exists $x_0\in(a,b)$ such that $|w-A|<\epsilon$ on $(x_0,b)$. It follows that
\begin{equation}\label{LHR00031}
|u-Av|<\epsilon|v|
\end{equation}
almost everywhere on $(x_0,b)$ with respect to $d\alpha$. Hence for any $x\in(x_0,b)$ we have
$$\left|\int_{(x,b)}ud\alpha-A\int_{(x,b)}vd\alpha\right|\leq\int_{(x,b)}|u-Av|d\alpha\leq\epsilon\int_{(x,b)}|v|d\alpha=\epsilon\left|\int_{(x,b)}vd\alpha\right|,$$
where the last equality is justified by the fact that either $v$ is positive or negative almost everywhere on $(a,b)$ with respect to $d\alpha$. This completes the proof of the first statement.

To prove the second statement note that inequality \eqref{LHR00031} gives
$$\left|\int_{(r,x)}ud\alpha-A\int_{(r,x)}vd\alpha\right|\leq\epsilon\left|\int_{(r,x)}vd\alpha\right|,$$
whenever  $x_0\leq r<x<b$. Thus for any $r\geq x_0$ we have
$$\lim_{x\uparrow b}\frac{\int_{(r,x)}ud\alpha}{\int_{(r,x)}vd\alpha}=A.$$
Finally, using additivity of integrals and the fact that $\int_r^bvd\alpha=\mp\infty$, it is easy to show that the last inequality holds when $r<x_0$.
\end{proof}

\begin{thm}[L'Hospital's Monotone Rule via Lebesgue-Stieltjes Integrals]\label{LHR00032}
Suppose that $-\infty\leq a<b\leq\infty$ and let $\alpha$ be a strictly increasing function on $(a,b)$. Let $u$ and $v$ be real-valued functions on $(a,b)$ that are integrable on $(a,t)$ with respect to $d\alpha$ for every $t\in(a,b)$ such that either $v>0$ almost everywhere on $(a,b)$ with respect to $d\alpha$ or $v<0$ almost everywhere on $(a,b)$ with respect to $d\alpha$. Define $h:(a,b)\to\mathbb{R}$ by
$$h(x)=\frac{\int_{(a,x)}ud\alpha}{\int_{(a,x)}vd\alpha}.$$
The following statements hold:
\begin{enumerate}[leftmargin=.65cm]
\item If there exists an increasing function $w:(a,b)\to\mathbb{R}$ such that $w=u/v$ almost everywhere on $(a,b)$ with respect to $d\alpha$, then $h$ is increasing on $(a,b)$.
\item If there exists a strictly increasing function $w:(a,b)\to\mathbb{R}$ such that $w=u/v$ almost everywhere on $(a,b)$ with respect to $d\alpha$, then $h$ is strictly increasing on $(a,b)$.
\end{enumerate}
\end{thm}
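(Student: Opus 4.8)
The plan is to mimic the classical proof of L'Hospital's monotone rule, where one shows that the derivative of the quotient $h$ has the right sign by relating it to the difference between $w=u/v$ and $h$ itself. First I would set up notation: for $s<t$ in $(a,b)$, write $U(t)=\int_{(a,t)}u\,d\alpha$ and $V(t)=\int_{(a,t)}v\,d\alpha$, so that $h=U/V$. By additivity of the integral, $U(t)-U(s)=\int_{(s,t)}u\,d\alpha$ and similarly for $V$. Since $v$ is of one sign $d\alpha$-almost everywhere, $V$ is strictly monotone, so $V(t)\neq V(s)$ and the denominators never vanish; I would first dispose of the (harmless) possibility that $V$ has a zero by noting that strict monotonicity of $V$ means it vanishes at most at one point, and the sign of $V$ on either side is determined.

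The heart of the argument is a comparison inequality. To show $h$ is increasing, I would fix $s<t$ and aim to prove $h(s)\leq h(t)$, i.e. $U(s)/V(s)\leq U(t)/V(t)$. The key observation is that on the interval $(s,t)$ we have $u=wv$ almost everywhere with $w$ increasing, so for $d\alpha$-almost every point in $(s,t)$,
\begin{equation}\label{cmp}
w(s^+)\,v \leq u = w\,v \leq w(t^-)\,v \quad\text{when } v>0,
\end{equation}
with the inequalities reversed on the set where $v<0$; integrating against $d\alpha$ over $(s,t)$ yields a two-sided bound
$$
\inf_{(s,t)} w \;\leq\; \frac{\int_{(s,t)}u\,d\alpha}{\int_{(s,t)}v\,d\alpha} \;\leq\; \sup_{(s,t)} w,
$$
provided the denominator is nonzero and one tracks its sign. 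This is exactly the integral analogue of the mean-value bound \eqref{LHR00020} used in Theorem \ref{LHR00017}, and I would lean on the monotonicity of $w$ to identify the infimum and supremum with one-sided limits. From here the standard algebraic manipulation shows that the incremental ratio $(U(t)-U(s))/(V(t)-V(s))$ lies between $h(s)$ and $h(t)$ in the appropriate sense, which forces $h(s)\leq h(t)$.

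More precisely, I would use the identity, valid whenever $V(s),V(t),V(t)-V(s)$ are all nonzero,
$$
h(t)-h(s)=\frac{U(t)}{V(t)}-\frac{U(s)}{V(s)}
=\frac{V(t)-V(s)}{V(t)}\left(\frac{U(t)-U(s)}{V(t)-V(s)}-\frac{U(s)}{V(s)}\right),
$$
so that the sign of $h(t)-h(s)$ is governed by the sign of $(V(t)-V(s))/V(t)$ times the bracket. Because $w$ is increasing, the incremental ratio is at least $\inf_{(s,t)}w\geq h(s)$ once one checks that $h(s)=U(s)/V(s)$ is itself a weighted average of $w$ over $(a,s)$ and hence bounded above by $\inf_{(s,t)}w$; this monotone-averaging step is the crux. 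Keeping careful track of whether $V$ is positive or negative (equivalently the sign of $v$), the product of the two factors comes out nonnegative, giving statement (1). For statement (2), I would argue by contradiction: if $h$ were constant on some subinterval $J$, then the incremental ratios would be constant there, which combined with the strict monotonicity of $w$ and the averaging inequality forces $w$ to be constant $d\alpha$-almost everywhere on $J$, contradicting strict monotonicity.

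The main obstacle I anticipate is the bookkeeping around the sign of $v$ and the possibility that $V$ changes sign, since the averaging inequalities flip direction on the set where $v<0$ and dividing by $V(t)-V(s)$ requires knowing it is nonzero with a definite sign. Reducing to the case $v>0$ almost everywhere (the case $v<0$ following by replacing $v$ with $-v$, which negates both $h$'s denominator and flips the inequalities consistently) should streamline this, after which $V$ is strictly increasing and of constant sign away from its unique possible zero, and the remaining estimates are routine.
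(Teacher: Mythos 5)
Your proposal is correct, but it takes a genuinely different route from the paper's. The paper argues on the differentiation side: it writes $h(t)=h^+(s)+\int_{(s,t)}(D_{\alpha}h)\,d\alpha$, applies the quotient rule to get $D_{\alpha}h=\frac{v}{\psi^+}\left(w-\frac{\phi}{\psi}\right)$ almost everywhere (with $\phi(x)=\int_{(a,x)}u\,d\alpha$ and $\psi(x)=\int_{(a,x)}v\,d\alpha$), and thus reduces everything to the pointwise inequality $\phi(x)/\psi(x)\leq w(x)$, which it proves via an auxiliary function $\gamma=\left(\phi(x)-\phi^+(r)\right)\psi-\left(\psi(x)-\psi^+(r)\right)\phi$, a measure-positivity (mean-value-type) argument, and letting $r\downarrow a$. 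You never touch $D_{\alpha}$ at all: your algebraic identity
$$h(t)-h(s)=\frac{V(t)-V(s)}{V(t)}\left(\frac{U(t)-U(s)}{V(t)-V(s)}-h(s)\right),$$
combined with the averaging bounds $h(s)\leq\sup_{(a,s)}w\leq\inf_{(s,t)}w\leq\frac{U(t)-U(s)}{V(t)-V(s)}$ (each a one-line consequence of integrating $u=wv$ against $v\,d\alpha$ and using that $w$ is increasing), does the whole job, and your contradiction argument for the strict case matches the paper's (constancy of $h$ on a subinterval forces $w$ constant $d\alpha$-a.e.\ there). What your route buys is self-containedness: it needs only additivity and monotonicity of the integral, whereas the paper's proof leans on its differentiation machinery (Theorem \ref{LHR00029.5} and the quotient rule for $D_{\alpha}$); conversely, the paper's key inequality $\phi/\psi\leq w$ is exactly your ``monotone-averaging step,'' so the two proofs share the same crux. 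One small repair: to reduce to the case $v>0$ you must replace the pair $(u,v)$ by $(-u,-v)$, which leaves both $h$ and $w=u/v$ unchanged; negating $v$ alone, as you wrote, turns $w$ into the decreasing function $-w$ and breaks the hypothesis. Also note that with $v>0$ a.e.\ and $\alpha$ strictly increasing, $V>0$ everywhere on $(a,b)$ (every open subinterval has positive $d\alpha$-measure), so your worry about a zero of $V$ is vacuous.
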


\begin{proof}
To prove the first claim, note that the equation
$$h(t)=h^+(s)+\int_{(s,t)}(D_{\alpha}h)d\alpha$$
holds for any $s<t$ in $(a,b)$. Hence it suffices to show that $D_{\alpha}h\geq0$ almost everywhere on $(a,b)$ with respect to $d\alpha$.

Define the left-continuous functions $\phi:(a,b)\to\mathbb{R}$ and $\psi:(a,b)\to\mathbb{R}$ by
$$\phi(x)=\int_{(a,x)}ud\alpha\text{ and }\psi(x)=\int_{(a,x)}vd\alpha.$$
Then
$$D_{\alpha}h=\frac{u\psi-v\phi}{\psi\psi^+}=\frac{v}{\psi^+}\left(w-\frac{\phi}{\psi}\right)
$$
almost everywhere on $(a,b)$ with respect to $d\alpha$. Since $v/\psi^+>0$ almost everywhere on $(a,b)$ with respect to $d\alpha$, completing the proof requires showing that 
$$\frac{\phi(x)}{\psi(x)}\leq w(x)$$
for all $x\in(a,b)$.

Fix $x\in(a,b)$ and let $r\in(a,x)$ be given. Define $\gamma:(r,x)\to\mathbb{R}$ by
$$\gamma=\left(\phi(x)-\phi^+(r)\right)\psi-\left(\psi(x)-\psi^+(r)\right)\phi.$$
Observe that $\gamma^+(r)=\gamma(x)$ and that
$$
\begin{aligned}
D_{\alpha}\gamma
&
=\left(\phi(x)-\phi^+(r)\right)v-\left(\psi(x)-\psi^+(r)\right)u
\\
&
=v\left(\psi(x)-\psi^+(r)\right)\left(\frac{\phi(x)-\phi^+(r)}{\psi(x)-\psi^+(r)}-w\right)
\end{aligned}
$$
almost everywhere on $(r,x)$ with respect to $d\alpha$. Then
$$0=\gamma(x)-\gamma^+(r)=\int_{(r,x)}(D_{\alpha}\gamma)d\alpha.$$
Therefore the set
$$\left\{s\in(r,x):(D_{\alpha}\gamma)(s)\leq0\right\}=\left\{s\in(r,x):\frac{\phi(x)-\phi^+(r)}{\psi(x)-\psi^+(r)}\leq w(s)\right\}$$
has nonzero $d\alpha$-measure. Note that the equality of the above two sets is justified by that $v\left(\psi(x)-\psi^+(r)\right)>0$ almost everywhere on $(r,x)$ with respect to $d\alpha$. Since $w$ is increasing on $(a,b)$, we conclude that
$$\frac{\phi(x)-\phi^+(r)}{\psi(x)-\psi^+(r)}\leq w(x).$$
Finally, since $r\in(a,x)$ is arbitrary in the last inequality, letting $r\downarrow a$ gives
$$\frac{\phi(x)}{\psi(x)}\leq w(x),$$
which completes the proof of the first claim.

To prove the second claim suppose that $w$ is strictly increasing on $(a,b)$. It follows from the first statement that $h$ is increasing on $(a,b)$. If $h$ is not strictly increasing on $(a,b)$, then there exist $C\in\mathbb{R}$ such that $h=C$ on some interval $(s,t)$, where $a<s<t<b$. Then $u=Cv$ almost everywhere on $(s,t)$. Hence $w=C$ almost everywhere on $(s,t)$, contradicting that $w$ is strictly increasing.
\end{proof}

\section{Comparison of L'Hospital-Type Rules}\label{LHR00037.5}

The goal of this section is to show that Theorem \ref{LHR00030} is more general than Theorem \ref{LHR00017} and that Theorem \ref{LHR00032} is more general than Theorem \ref{LHR00023}. We will prove only the first claim, as a similar argument can be used to prove the second claim. To achieve this, we introduce the following result.

\begin{thm}\label{LHR00038}
Suppose that $-\infty\leq a<b\leq\infty$.
If $\alpha$ is a strictly increasing function on $(a,b)$ and $f$ is an increasing function on $(a,b)$ such that $\left(D_{\alpha}f\right)(x)$ exists for all $x\in(a,b)$, then $df$ is locally absolutely continuous with respect to $d\alpha$.
\end{thm}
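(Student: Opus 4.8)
The plan is to show local absolute continuity directly from the definition by reducing to the earlier quantitative estimate in Theorem \ref{LHR00029.01}. The key observation is that the hypotheses of Theorem \ref{LHR00038} almost, but not quite, match those of Theorem \ref{LHR00029.01}: here $f$ is increasing with $D_{\alpha}f$ existing everywhere, but I am \emph{not} given that $D_{\alpha}f$ is locally bounded. So the first statement of Theorem \ref{LHR00029.01} cannot be quoted verbatim. The task is to recover local absolute continuity without that boundedness assumption, exploiting instead that $f$ is increasing (hence of locally bounded variation, so $df$ is a genuine positive Lebesgue-Stieltjes measure) and that $D_{\alpha}f \geq 0$ everywhere.

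First I would fix $a<s<t<b$ and restrict attention to the interval $(s,t)$, since local absolute continuity is a statement about Borel subsets of such intervals. On $(s,t)$ both $df$ and $d\alpha$ are finite positive measures: $d\alpha((s,t)) = \alpha^-(t)-\alpha^+(s)$ and $df((s,t)) = f^-(t)-f^+(s)$ are finite because $\alpha$ is strictly increasing and $f$ is increasing. The goal is to prove that $d\alpha(S)=0 \implies df(S)=0$ for every $S \in \mathbb{B}((s,t))$. The natural route is the $\varepsilon$--$\delta$ characterization of absolute continuity for finite measures, but since I want a set-theoretic implication I would instead argue directly via outer-measure covers.

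The main step, and the place where I expect the real work to lie, is a \emph{local} boundedness-on-covers argument. For each $x\in(s,t)$ the existence of $\left(D_{\alpha}f\right)(x)$ as a finite limit gives an $\epsilon_x>0$ and a bound $M_x$ such that $f^-(x+h)-f^+(x-h) \leq M_x\left(\alpha^-(x+h)-\alpha^+(x-h)\right)$ for all small $h$; that is, $f$ grows at a controlled $d\alpha$-rate near $x$. Using a Vitali-type covering of $S$ (or a compactness/exhaustion argument on the countable set of discontinuities plus the remaining set), I would cover $S$ by small intervals $(s_j,t_j)$ on each of which $f^-(t_j)-f^+(s_j) \leq M_j\left(\alpha^-(t_j)-\alpha^+(s_j)\right)$. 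The obstacle is that the local constants $M_x$ may blow up as $x$ ranges over $(s,t)$, so a single uniform $M$ need not exist — this is exactly why the clean estimate from Theorem \ref{LHR00029.01} is unavailable. To circumvent this I would stratify: write $(s,t) = \bigcup_n E_n$ where $E_n = \{x : M_x \leq n\}$, handle each $E_n$ with a uniform constant $n$ so that $df(S\cap E_n) \leq n\, d\alpha(S\cap E_n) = 0$ whenever $d\alpha(S)=0$, and then conclude $df(S) = \sum_n df(S\cap (E_n\setminus E_{n-1})) = 0$ by countable additivity of the positive measure $df$.

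Alternatively, and perhaps more cleanly, I would sidestep the covering machinery by invoking Theorem \ref{LHR00029.5}: since $f$ is increasing it is of locally bounded variation, so part (1) of that theorem gives that $D_{\alpha}f$ exists $d\alpha$-almost everywhere (here it exists everywhere), and the Lebesgue-Radon-Nikodym decomposition of $df$ with respect to $d\alpha$ reads $df = \left(D_{\alpha}f\right)d\alpha + d\mu_s$ with $\mu_s$ singular. The point is then to show the singular part $\mu_s$ vanishes: on the set carrying $\mu_s$ one has $d\alpha$-measure zero, yet at every point of $(s,t)$ the finite derivative $\left(D_{\alpha}f\right)(x)$ exists, which forces the singular tangent $d\mu_s/d\alpha$ to be $+\infty$ $\mu_s$-almost everywhere unless $\mu_s=0$; since $D_{\alpha}f$ is finite \emph{everywhere} by hypothesis, $\mu_s$ must be the zero measure, giving $df = \left(D_{\alpha}f\right)d\alpha$ and hence local absolute continuity. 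I expect this second approach to be shorter, with the delicate point being the correct statement that everywhere-finiteness of the derivative kills the singular part — this is the step I would write out most carefully.
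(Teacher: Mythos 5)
Both of your routes can be made to work, and they relate to the paper differently. Your first plan is essentially the paper's own proof: the paper fixes $(s,t)$, stratifies a $d\alpha$-null Borel set $S$ as $S=\bigcup_n A_n$ with $A_n=\{x\in S:\left(D_{\alpha}f\right)(x)<n\}$, takes an open set $O\supset A_n$ with $d\alpha(O)<\epsilon/n$, uses formula \eqref{LHR00004.5} to build a fine cover of $A_n$ by closed intervals $[x-h,x+h]\subset O$ satisfying $df([x-h,x+h])\leq n\,d\alpha([x-h,x+h])$, and applies the Vitali covering theorem (Theorem 1.28 in \cite{MR3409135}) to get $df(A_n)<\epsilon$; note that your inequality $df(S\cap E_n)\leq n\,d\alpha(S\cap E_n)$ is not available ``directly'' for the set itself but is precisely this outer-regularity-plus-Vitali step, so in this route the covering machinery cannot be skipped (the paper also uses subadditivity $df(S)\leq\sum_n df(A_n)$ rather than disjointifying, which sidesteps measurability questions about the strata). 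Your second plan is genuinely different from the paper and is the more interesting comparison: decompose $df=\rho\,d\alpha+\mu_s$ and argue that everywhere-finiteness of $D_{\alpha}f$ forces $\mu_s=0$. This is correct, but the fact you need --- that $\mu_s\perp d\alpha$ implies $\lim_{h\downarrow0}\mu_s([x-h,x+h])/d\alpha([x-h,x+h])=\infty$ at $\mu_s$-a.e.\ $x$ --- is not among the statements the paper quotes: Theorem \ref{LHR00029.5} and the decomposition theorem give that the derivative of $\mu_s$ with respect to $d\alpha$ vanishes only $d\alpha$-a.e., which says nothing at the points carrying $\mu_s$. You obtain it by applying the Besicovitch differentiation theorem with the roles of the two measures exchanged: since $d\alpha\perp\mu_s$, the derivative of $d\alpha$ with respect to $\mu_s$ is $0$ at $\mu_s$-a.e.\ $x$; such $x$ lie in the support of $\mu_s$, and $d\alpha([x-h,x+h])>0$ for all $h$ because $\alpha$ is strictly increasing, so the reciprocal ratio tends to $\infty$, and then $df\geq\mu_s$ contradicts finiteness of $\left(D_{\alpha}f\right)(x)$ unless $\mu_s=0$. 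Also, as your write-up implicitly respects, only part (1) of Theorem \ref{LHR00029.5} may be invoked here; part (2) assumes the very absolute continuity being proved. In short: the paper's covering argument is self-contained given the tools it already cites, while your decomposition argument is shorter and conceptually cleaner but leans on a slightly stronger piece of differentiation theory that must be supplied (or proved by the role-swapping argument above).
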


\begin{proof}
Fix an interval $(s,t)$ that satisfies $a<s<t<b$. Suppose that $S\subset(s,t)$ is a Borel set for which $d\alpha(S)=0$. For each $n\in\mathbb{N}$ let $A_n=\{x\in S:\left(D_{\alpha}f\right)(x)<n\}$. Then $S=\bigcup_{n=1}^{\infty}A_n$, which implies $df(S)\leq\sum_{n=1}^{\infty} df(A_n)$. Thus it suffices to show that $df(A_n)=0$ for every $n\in\mathbb{N}$. 

Fix $n\in\mathbb{N}$ and let $\epsilon>0$ be given. Since $d\alpha(A_n)=0$, there exists an open set $O\subset(s,t)$ such that $O\supset A_n$ and $d\alpha(O)<\epsilon/n$. Formula \eqref{LHR00004.5}, together with the inequality $\left(D_{\alpha}f\right)(x)<n$ for all $x\in A_n$, inform us that for each $x\in A_n$, there exists $\delta_x>0$ such that $(x-\delta_x,x+\delta_x)\subset O$ and
$$\frac{df\left([x-h,x+h]\right)}{d\alpha\left([x-h,x+h]\right)}\leq n$$
whenever $h\in(0,\delta_x)$.
The collection $\mathcal{F}=\{[x-h,x+h]:x\in A_n, h\in(0,\delta_x)\}$, together with the set $A_n$ satisfies the hypothesis of Theorem $1.28$ in \cite{MR3409135}. Then there exists a countable collection of pairwise disjoint intervals $\{[x_j-h_j,x_j+h_j]:j\in J\}$ in $\mathcal{F}$ such that
$$df\left(A_n\backslash\bigcup_{j\in J}[x_j-h_j,x_j+h_j]\right)=0.$$
Hence
$$
df\left(A_n\right)\leq df\left(\bigcup_{j\in J}[x_j-h_j,x_j+h_j]\right)=\sum_{j\in J}df\left([x_j-h_j,x_j+h_j]\right)\leq n d\alpha(O)<\epsilon.
$$
We conclude that $(df)^*(A_n)=0$, which completes the proof.
\end{proof}

Now suppose that the functions $f$, $g$, and $\alpha$ satisfy the requirements of Theorem \ref{LHR00017}. Our goal is to show that the conclusion of Theorem \ref{LHR00017}, i.e.,
$$\lim_{x\uparrow b}\frac{f^-(x)}{g^-(x)}=A,$$
can be achieved using Theorem \ref{LHR00030}.

We consider only the case  $\lim_{x\uparrow b}f^-(x)=\lim_{x\uparrow b}g^-(x)=0$ and $A\in(-\infty,\infty)$, as similar proofs can be constructed for the remaining cases. First, set $u=D_{\alpha}f$ and $v=D_{\alpha}g$. We claim that $u$ and $v$ are integrable near $b$ with respect to $d\alpha$ and that
$$-g^+(s)=\int_{(s,b)}vd\alpha \text{  and  }-f^+(s)=\int_{(s,b)}ud\alpha$$
when $s$ is sufficiently close to $b$.  Establishing this claim will allow us to use the second statement of Theorem \ref{LHR00030} as desired.

To establish this claim note that since $v$ has the same sign on $(a,b)$, it follows from Theorems \ref{LHR00003} and \ref{LHR00007} that either $g^-$ is strictly increasing on $(a,b)$ or $g^-$ is strictly decreasing on $(a,b)$. Then Theorem \ref{LHR00038} informs us that $dg$ is locally absolutely continuous with respect to $d\alpha$, i.e.,
$$g^-(t)-g^+(s)=\int_{(s,t)}vd\alpha$$
whenever $a<s<t<b$. By letting $t\uparrow b$ we obtain
$$-g^+(s)=\int_{(s,b)}vd\alpha,$$
for all $a<s<b$, justified by the monotone convergence theorem.

Now since 
$$\lim_{x\uparrow b}\frac{u(x)}{v(x)}=A$$
there exists $r\in(a,b)$ such that $|u(x)-Av(x)|<|v(x)|$
for all $x\in(r,b)$. Hence $|u(x)|<(1+|A|)|v(x)|$ for all $x\in(r,b)$. Since $v$ is integrable near $b$ with respect to $d\alpha$, so is $u$.

If $A\neq0$, then there exists $r_0\in(r,b)$ such that $u$ and $v$ have the same sign on $(r_0,b)$. This implies that $u$ has the same sign on $(r,b)$. Hence, a similar argument to the one used for $v$ shows that
$$-f^+(s)=\int_{(s,b)}ud\alpha,$$
for all $r_0<s<b$.

If $A=0$, then 
$$\lim_{x\uparrow b}\frac{u(x)+v(x)}{v(x)}=1.$$
By applying a similar argument to the one used for $v$, there exists $r_1\in(r,b)$ such that
$$-f^+(s)-g^+(s)=\int_{(s,b)}(u+v)d\alpha$$
for all $r_1<s<b$. Hence 
$$-f^+(s)=\int_{(s,b)}ud\alpha$$
for all $r_1<s<b$.

\end{document}